\newtheorem{theorem}{Theorem}[section]
\newtheorem{theorem/definition}{Theorem/Definition}[section]
\newtheorem{proposition}{Proposition}[section]
\newtheorem{lemma}{Lemma}[section]
\theoremstyle{remark}
\newtheorem{remark}{Remark}[section]
\theoremstyle{definition}
\newtheorem{example}{Example}[section]
\begin{document}
\title
{On Locally Conformally Flat gradient Steady Ricci Solitons}
\author{Huai-Dong Cao and Qiang Chen}
\address{Department of Mathematics\\ Lehigh University\\
Bethlehem, PA 18015} \email{huc2@lehigh.edu; qic208@lehigh.edu}

\begin{abstract}
In this paper, we classify $n$-dimensional ($n \geq 3$) complete
noncompact locally conformally flat gradient steady solitons. In
particular, we prove that a complete noncompact non-flat
locally conformally flat gradient steady Ricci soliton is, up to scaling,
the Bryant soliton.

\end{abstract}

\maketitle
\date{}

\footnotetext[0]{2000 {\sl Mathematics Subject ClassiÞcation}. Primary 53C21, 53C25. 

The first author was partially supported by NSF Grants  DMS-0506084 and DMS-0909581; 
the second author was partially supported by NSF Grant DMS-0354621 and a Dean's Fellowship of
the School of Arts and Sciences at Lehigh University.}

\section{The results}

A complete Riemannian metric $g_{ij}$ on a smooth manifold $M^n$
is called a {\it gradient steady Ricci soliton} if there exists
a smooth function $F$ on $M^n$ such that the Ricci tensor $R_{ij}$
of the metric $g_{ij}$ is given by the Hessian of $F$:
$$R_{ij}=\nabla_i\nabla_jF. \eqno(1.1)$$
The function $F$ is called a {\it potential function} of the gradient steady soliton.
Clearly, when $F$ is a constant the gradient steady Ricci soliton is simply a Ricci flat manifold. 
Thus Ricci solitons are natural extensions of Einstein metrics. 
Gradient steady solitons play an important role in
Hamilton's Ricci flow as they correspond to translating
solutions, and often arise as Type II singularity models. Thus one is interested
in classifying them and understand their geometry.

It turns out that compact gradient steady solitons must be Ricci flat. In dimension $n=2$, Hamilton \cite{Ha88} discovered the 
first example of a complete noncompact gradient steady
soliton on $\mathbb R^2$, called the {\it cigar soliton}, where the metric is given by
$$ ds^2=\frac{dx^2 +dy^2}{1+x^2+y^2}.$$ 
The cigar soliton has positive curvature and is asymptotic to a cylinder of finite
circumference at infinity.  Furthermore, Hamilton \cite{Ha88} showed that {\sl the only
complete steady soliton on a two-dimensional manifold with
bounded (scalar) curvature $R$ which assumes its maximum
at an origin is, up to scaling,  the cigar soliton}. For $n\geq 3$, Bryant \cite{Bryant} proved that
{\sl there exists, up to scaling, a unique complete rotationally symmetric gradient Ricci
soliton on $\Bbb R^n$}, see, e.g.,  Chow et al. \cite{Chow et al 1}
for details. The Bryant soliton has positive sectional curvature, linear curvature decay
and volume growth on the order of $r^{(n+1)/2}$. In the K\"ahler case,
the first author \cite{Cao94} constructed a complete gradient steady K\"ahler-Ricci soliton on $\mathbb{C}^m$,
for $m\geq 2$, with positive sectional curvature and $U(m)$ symmetry.

A well-known conjecture is that, in dimension $n=3$, the Bryant soliton is the
only complete noncompact ($\kappa$-noncollapsed) gradient steady
soliton with positive sectional curvature\footnote{Perelman (\cite{P1}, {\bf 11.9})
claimed that the conjecture is true but didn't give any detail, or sketch,  of a
proof.}.  We remark that S.-C.
Chu \cite{Chu} and H. Guo \cite{Guo} have studied the geometry of
3-dimensional gradient steady solitons with positive sectional
curvature and the scalar curvature $R$ attaining its maximum at
some origin. For $n\ge 4$, it is also natural to ask if the Bryant
soliton is the only complete noncompact, positively curved,
locally conformally flat gradient steady soliton. In this paper,
we classify $n$-dimensional ($n\geq 3$) complete noncompact
locally conformally flat gradient steady solitons, and give an
affirmative answer to the latter question.   Our main results:

\begin{theorem} Let $(M^n, g_{ij}, F)$, $n\ge 3$, be a $n$-dimensional  complete noncompact locally conformally flat
gradient  steady Ricci soliton with positive sectional curvature. Then,  $(M^n, g_{ij}, F)$ is isometric to 
the Bryant soliton.
\end{theorem}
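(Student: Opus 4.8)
The plan is to show that a soliton satisfying the hypotheses must be rotationally symmetric, and then to quote Bryant's uniqueness theorem recalled above. First I would collect the standard identities for a gradient steady soliton obeying (1.1): tracing gives $\Delta F=R$; the contracted second Bianchi identity gives that $\nabla R$ is proportional to $\mathrm{Ric}(\nabla F)$, and hence that $R+|\nabla F|^2$ is a constant, say $C$; and commuting covariant derivatives in (1.1) gives the curvature identity
$$\nabla_i R_{jk}-\nabla_j R_{ik}=R_{ijk\ell}\,\nabla_\ell F.$$
Since the sectional curvature is positive, $R_{ij}>0$, so $F$ is strictly convex, $R>0$, and (a complete noncompact manifold of positive sectional curvature being diffeomorphic to $\RR^n$) $M$ is diffeomorphic to $\RR^n$. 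Strict convexity of $F$ forces at most one critical point, so the open set $M_0=\{\nabla F\neq 0\}$ is $M$ minus at most one point.

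Next I would exploit local conformal flatness. For every $n\ge 3$ the hypothesis forces the Cotton tensor
$$C_{ijk}=\nabla_i R_{jk}-\nabla_j R_{ik}-\tfrac{1}{2(n-1)}\bigl(g_{jk}\nabla_i R-g_{ik}\nabla_j R\bigr)$$
to vanish (for $n=3$ this \emph{is} the hypothesis; for $n\ge 4$ it follows from $W=0$), and $W=0$ lets one write the full curvature tensor in terms of the Ricci and scalar curvatures. Substituting that expression into the curvature identity above and eliminating $\nabla R$ via its proportionality to $\mathrm{Ric}(\nabla F)$ produces an algebraic relation among $R_{ij}$, $g_{ij}$, $\nabla_i F$ and $E_i:=R_{ij}\nabla_j F$. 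Contracting this relation with $\nabla_k F$ yields $E_i\,\nabla_j F=E_j\,\nabla_i F$, i.e.\ $\nabla F$ is an eigenvector of the Ricci tensor, $R_{ij}\nabla_j F=\rho\,\nabla_i F$ on $M_0$; feeding this back and evaluating in an orthonormal frame $e_1=\nu:=\nabla F/|\nabla F|,\ e_2,\dots,e_n$ shows that on $M_0$
$$R_{ij}=\rho\,\nu_i\nu_j+\lambda\,(g_{ij}-\nu_i\nu_j),\qquad \lambda=\frac{R-\rho}{n-1},$$
so the Ricci tensor has exactly two eigenvalues, one along $\nabla F$ and one of multiplicity $n-1$ transverse to it. I expect this step — the bookkeeping that combines $W=0$, $C=0$ and the soliton identities to pin down the Ricci tensor — to be the technical heart of the proof, since it is the only place where local conformal flatness is used nontrivially.

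From here the geometry is essentially forced. As $\nabla R$ is proportional to $\nabla F$, the scalar curvature $R$, and therefore $|\nabla F|^2=C-R$, is constant on each connected level set $\Sigma_c$ of $F$; consequently $\rho$ (being, up to a constant factor, $|\nabla F|^{-1}$ times the normal derivative of $R$) and hence $\lambda$ are constant on $\Sigma_c$ as well. By (1.1) the second fundamental form of $\Sigma_c$ with respect to $\nu$ equals $|\nabla F|^{-1}R_{ij}=\lambda|\nabla F|^{-1}g_{ij}$ on $T\Sigma_c$, so the level sets are totally umbilic with constant mean curvature; taking $r$ to be arclength along $\nu$ then exhibits the metric on $M_0$ as a warped product $g=dr^2+\phi(r)^2\bar g$. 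The Gauss equation together with the two-eigenvalue form of $R_{ij}$ and $W=0$ shows that each $\Sigma_c$ has constant sectional curvature, necessarily positive, so the fibers are round spheres $S^{n-1}$ and the soliton is rotationally symmetric. Finally I would check (this is standard, and simultaneously rules out the possibility that $\nabla F$ never vanishes) that the metric extends smoothly across the unique critical point of $F$, so that $(M^n,g_{ij},F)$ is a complete rotationally symmetric gradient steady Ricci soliton on $\RR^n$; by Bryant's theorem it is therefore, up to scaling, the Bryant soliton.
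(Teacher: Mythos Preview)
Your proposal is correct and follows essentially the same route as the paper: use $W=0$ together with $C_{ijk}=0$ and the soliton identity $\nabla_iR_{jk}-\nabla_jR_{ik}=R_{ijkl}\nabla_lF$ to force the Ricci tensor to have the two-eigenvalue structure with $\nabla F$ as an eigenvector, then read off from the level-set geometry that the metric is a warped product over round spheres and invoke Bryant's uniqueness. The only cosmetic difference is in the central algebraic step: the paper packages it as an explicit formula for $|C_{ijk}|^2$ (its Lemma~3.1) and then diagonalizes, whereas you propose to contract the equation $C_{ijk}=0$ directly with $\nabla_kF$; both computations yield the same conclusion, and the remaining steps (constancy of $R$, $|\nabla F|^2$, $\rho$, $\lambda$, $H$ on level sets; Gauss equation; extension across the unique critical point) match the paper's Lemma~3.3 and the concluding paragraph of its Proposition~3.1.
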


\begin{theorem} Let $(M^n, g_{ij}, F)$, $n\ge 3$, be a $n$-dimensional  complete noncompact locally conformally flat
gradient  steady Ricci soliton. Then,  $(M^n, g_{ij}, F)$ is
either flat  or isometric to the Bryant soliton.
\end{theorem}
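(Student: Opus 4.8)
The plan is to reduce the statement to the preceding theorem (Theorem 1.1) by showing that a non-flat complete noncompact locally conformally flat gradient steady soliton automatically has positive sectional curvature; equivalently, one could argue directly that such a soliton is rotationally symmetric on $\RR^n$ and invoke Bryant's uniqueness theorem quoted in the introduction, but I will phrase it through Theorem 1.1. So assume $(M^n,g_{ij},F)$ is not flat. First I would record the sign of the scalar curvature: a steady soliton is an ancient solution of the Ricci flow, so $R\ge 0$ by B.-L. Chen's theorem; the contracted identities $R=\Delta F$ and $\nabla_iR=-2R_{ij}\nabla^jF$ give $\Delta R+\langle\nabla F,\nabla R\rangle=-2|Rc|^2\le 0$, so by the strong maximum principle either $R>0$ everywhere or $R\equiv 0$. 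In the latter case $Rc\equiv 0$, and since $M$ is locally conformally flat (so the Weyl tensor vanishes, automatically in dimension $3$) the full curvature tensor vanishes and $M$ is flat, a contradiction. Hence $R>0$ on $M$. I would also use the steady-soliton identity $R+|\nabla F|^2=\mathrm{const}$, which must be a positive constant (otherwise $R\le -|\nabla F|^2\le 0$), normalized to $1$; thus $0<R<1$, and $\nabla F$ vanishes only at the critical points of $R$.

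Next, the curvature structure. Local conformal flatness forces the Cotton tensor $C_{ijk}$ to vanish for every $n\ge 3$ (for $n\ge 4$ because $C$ is, up to a constant, the divergence of the Weyl tensor; for $n=3$ because $C\equiv 0$ is precisely the condition). Feeding the soliton equation into $C_{ijk}=0$ — using the commutation formula $\nabla_iR_{jk}-\nabla_jR_{ik}=-R_{ijk\ell}\nabla^\ell F$, the identity $\nabla_iR=-2R_{i\ell}\nabla^\ell F$, and the expression of the Riemann tensor through the Ricci tensor valid under local conformal flatness — one deduces that on $\Omega:=\{\nabla F\ne 0\}$ the vector $\nabla F$ is an eigenvector of $Rc$ and
\[ R_{ij}=\alpha\, g_{ij}+\beta\,\frac{\nabla_iF\,\nabla_jF}{|\nabla F|^2} \]
for smooth functions $\alpha,\beta$; that is, $Rc$ has eigenvalue $\alpha$ with multiplicity $n-1$ on $(\nabla F)^\perp$ and eigenvalue $\mu:=\alpha+\beta$ along $\nabla F$. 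Moreover $\nabla R$ is parallel to $\nabla F$, so $R$, $|\nabla F|$, $\alpha$ and $\mu$ are constant on each connected level set of $F$ inside $\Omega$.

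From here I would extract a warped-product structure. On $\Omega$ the regular level sets $\Sigma_c=\{F=c\}$ have second fundamental form $h_{ab}=|\nabla F|^{-1}R_{ab}|_{\Sigma_c}=(\alpha/|\nabla F|)\,g_{ab}$, hence are totally umbilic with constant mean curvature; combined with the vanishing of the Weyl tensor, the Gauss equation forces each $\Sigma_c$ to have constant sectional curvature. So on $\Omega$ the metric is a warped product $g=dr^2+\phi(r)^2\bar g$ with $\bar g$ of constant curvature $\kappa_0\in\{-1,0,1\}$ and $F=F(r)$, $F'=|\nabla F|$. Substituting into $R_{ij}=\nabla_i\nabla_jF$ yields a second-order ODE system for $\phi$ (with $\phi''/\phi=-\mu/(n-1)$) and $F'$, and the sectional curvatures are $K(\partial_r,\cdot)=-\phi''/\phi=\mu/(n-1)$ and $K(\text{fibre})=(\kappa_0-(\phi')^2)/\phi^2=\frac{(n-1)\alpha-\mu}{(n-1)(n-2)}$. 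The set $\Omega$ is open and dense, since on an open set of critical points of $F$ one would have $Rc\equiv 0$ and hence $R=0$ there. Using $0<R<1$ one sees $F$ has no critical point inside a warped region, so the critical set of $F$ consists of collapse points ("poles") of the warped product; $M$ noncompact admits at most one such pole (two poles would make $M$ compact, hence flat since compact gradient steady solitons are Ricci flat), and the pole-free case $M\cong\RR\times_\phi N$ is excluded by the ODE together with $R>0$. Thus $M$ has a single pole, at which smoothness forces $\kappa_0=1$, $\phi(0)=0$, $\phi'(0)=1$, and $M$ is rotationally symmetric on $\RR^n$. Strict concavity of $\phi$ ($\mu>0$) together with $\phi'(0)=1$ and $\kappa_0=1$ then give $0<(\phi')^2<1$ and $\phi''<0$ for $r>0$, so the sectional curvature is positive on $\Omega$, and at the pole it is positive as well by a direct computation from $R>0$. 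By Theorem 1.1, $(M^n,g_{ij},F)$ is isometric to the Bryant soliton; together with the excluded flat possibility this proves the theorem.

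The main obstacle is the warped-product and ODE analysis in the third step: turning the pointwise Ricci structure on $\Omega$ into a genuine global warped product over an interval with round-sphere fibres — pinning down the topology (critical set equal to a single pole, $M\cong\RR^n$; excluding the pole-free case) and extracting the crucial signs $\mu>0$, equivalently $\phi''<0$, equivalently $R$ strictly decreasing along $\nabla F$, and $\kappa_0=1$ from the soliton ODE and the bound $0<R<1$. A convenient route to the sign of $\phi''$ is to first establish $Rc\ge 0$ — e.g. by a maximum-principle argument for the lowest Ricci eigenvalue along the flow, or directly from the constant-curvature level sets — and then integrate the ODE outward from the pole; but securing nonnegativity and strictness of the Ricci eigenvalues, and the limiting behaviour of $\phi$ at the ends, is where the real work lies. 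Once this structural analysis is in hand, the conclusion follows from Theorem 1.1, or, bypassing it, from Bryant's uniqueness theorem for complete rotationally symmetric gradient steady solitons quoted in the introduction.
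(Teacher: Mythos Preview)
Your route diverges from the paper's. The paper does \emph{not} try to upgrade $R>0$ to positive sectional curvature via the warped-product ODE. Instead it invokes Proposition~2.4/2.5 (Chen for $n=3$, Zhang for $n\ge 4$): a complete locally conformally flat gradient steady soliton has bounded nonnegative curvature operator $0\le Rm\le C$. Hamilton's strong maximum principle for $Rm$ then gives the dichotomy $Rm>0$ everywhere (hence Theorem~1.1 applies) or the holonomy reduces; the reduced-holonomy case is disposed of by a short classification (conformally flat products, locally symmetric spaces, and the Berger--Simons list together with the K\"ahler Weyl-norm inequality), each forcing flatness. So the heavy lifting is outsourced to the cited $Rm\ge 0$ theorem and standard holonomy facts, and no ODE analysis of the warped product is needed.

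Your strategy---scalar-curvature dichotomy, Cotton-tensor algebra to get the two-eigenvalue Ricci structure on $\Omega=\{\nabla F\ne 0\}$, then a direct warped-product/ODE argument---is coherent and the first two steps mirror Lemmas~3.1--3.3 exactly. But the step you flag as the ``main obstacle'' is precisely the place where your argument is incomplete: without $Rc\ge 0$ in hand you have no a~priori reason for $\mu>0$ (equivalently $\phi''<0$), no guarantee that $F$ has a critical point (so the ``pole-free case excluded by the ODE together with $R>0$'' is asserted, not proved), and no control on the sign $\kappa_0$ of the level-set curvature. You yourself note that a ``convenient route'' is to first establish $Rc\ge 0$; but that is essentially the content of the Chen--Zhang result the paper uses at the outset. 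In short, your proposal either needs a genuine, self-contained ODE argument ruling out the pole-free and non-round-fibre cases under only $0<R<1$ (which you have not supplied), or it collapses back to the paper's approach once you import $Rm\ge 0$.
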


Our work was motivated in part by the works of physicists Israel \cite{Is} and Robinson \cite{Robin} concerning
the uniqueness of the Schwarzschild black hole among all static,
asymptotically flat vacuum space-times. In their
setting, the Einstein field equations take the form $$R_{ij}+
V^{-1}\nabla_i\nabla_jV =0 \eqno(1.2)$$ and $$ \Delta V=0 $$ for certain positive potential
function $V$ on a three-dimensional space-like hypersurface $(N^3,
g_{ij})$.   They proved that such $(N^3, g_{ij}, V)$ must be rotationally symmetric without assuming
locally conformal flatness. In fact, they were able to prove such $(N^3, g_{ij}, V)$ is locally conformally 
flat. However, it remains a challenge to do the same for 3-dimensional gradient steady Ricci solitons. 

\medskip \noindent {\bf Acknowledgments}. The first author is grateful to Rick Schoen for bringing the papers
 \cite{Is, Robin} to his attention.
He also would like to thank Bing-Long Chen and Xi-Ping Zhu for
helpful discussions, and S.-C. Chu for pointing out the reference
\cite{Guo}.

\section{Preliminaries}

In this section, we recall some basic facts and collect several known results about gradient steady
solitons.

Let $(M^n, g_{ij}, F)$ be a gradient steady Ricci soliton so that the Ricci tensor $R_{ij}$
of the metric $g_{ij}$ is given by the Hessian of the potential function $F$: 
$$R_{ij}=\nabla_i\nabla_jF.$$
Taking the covariant derivatives and
using the commutating formula for covariant derivatives, we obtain
$$\nabla_iR_{jk}-\nabla_jR_{ik}=R_{ijkl}\nabla_lF. \eqno(2.1)$$
Taking the trace on $j$ and $k$, and using the contracted second
Bianchi identity
$$\nabla_j R_{ij}=\frac {1}{2}\nabla_i R,$$
we get
$$\nabla_iR=-2R_{ij}\nabla_jF. $$
Thus
$$\nabla_i(R+|\nabla F|^2)=-2(R_{ij}-\nabla_i\nabla_jF)\nabla F=0.$$
Hence  we have 

\begin{lemma} {\bf (Hamilton \cite{Ha95F})} Let $(M^n, g_{ij}, F)$
be a complete gradient stead soliton satisfying Eq. (1.1).
Then we have
$$\nabla_iR=-2R_{ij}\nabla_jF, \eqno(2.2)$$ and
$$R+|\nabla F|^2=C_0 \eqno(2.3)$$ for some constant $C_0$. Here $R$
denotes the scalar curvature.
\end{lemma}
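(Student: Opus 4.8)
The plan is to differentiate the defining equation (1.1) and feed the result into the commutation (Ricci) identity for third covariant derivatives. First I would apply $\nabla_k$ to $R_{ij}=\nabla_i\nabla_j F$ and antisymmetrize in the pair $(i,k)$; since the Hessian $\nabla_i\nabla_j F$ is symmetric in its two slots, the commutator of third covariant derivatives of $F$ contributes exactly one curvature term, and after relabeling this yields Eq. (2.1),
$$\nabla_i R_{jk}-\nabla_j R_{ik}=R_{ijkl}\nabla_l F.$$
The only point needing attention here is bookkeeping the sign and index placement in the Ricci identity and in the convention for $R_{ijkl}$, which one fixes so that (2.1) reads as displayed.

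Next I would trace (2.1) over $j$ and $k$ with the metric. On the left, the first term gives $\nabla_i R$ and the second gives $\nabla^j R_{ij}$, which equals $\tfrac12\nabla_i R$ by the contracted second Bianchi identity, so the left-hand side collapses to $\tfrac12\nabla_i R$; on the right, the contraction $g^{jk}R_{ijkl}$ is $-R_{il}$ in this convention, so the right-hand side is $-R_{il}\nabla_l F$. Equating the two gives $\nabla_i R=-2R_{ij}\nabla_j F$, which is Eq. (2.2).

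Finally, for (2.3) I would differentiate $R+|\nabla F|^2$ and substitute both (2.2) and the soliton equation (1.1):
$$\nabla_i\big(R+|\nabla F|^2\big)=\nabla_i R+2(\nabla_i\nabla_j F)\nabla_j F=-2R_{ij}\nabla_j F+2R_{ij}\nabla_j F=0.$$
Since $M^n$ is connected, a function with identically vanishing gradient is constant, so $R+|\nabla F|^2\equiv C_0$ for some constant $C_0$. I do not expect a genuine obstacle: the entire lemma is a short tensor computation, and the only real care required is keeping the curvature sign conventions consistent between (2.1) and (2.2).
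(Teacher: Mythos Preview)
Your proposal is correct and follows exactly the same route as the paper: derive (2.1) by differentiating the soliton equation and applying the Ricci identity, trace over $j,k$ using the contracted second Bianchi identity to obtain (2.2), and then compute $\nabla_i(R+|\nabla F|^2)$ to conclude (2.3). The only difference is that you spell out the sign and index bookkeeping a bit more carefully than the paper does.
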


Taking the trace in (1.1), we get

$$R=\Delta F. \eqno(2.4)$$

Combining  (2.4) with (2.3), we have

$$\Delta F +|\nabla F|^2=C_0. \eqno(2.5)$$

If $M$ is compact, then it follows that 
$$0=\int_{M} \Delta (e^{F}) =\int_{M} (\Delta F +|\nabla F|^2) e{^F}=C_0 \int_M e^F.$$
Thus $C_0=0$. Now integrating (2.5), with  $C_0=0$, over $M$  yields
$$\int_M |\nabla F|^2=0,$$ implying that $F$ is a constant.  
Therefore, we get 

\begin{proposition}  {\bf (cf. Hamilton \cite{Ha95F}, Ivey \cite{Iv})} On a compact manifold
$M^n$, a gradient steady  Ricci soliton is necessarily a Ricci flat Einstein metric. 
\end{proposition}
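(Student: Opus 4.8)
The plan is to reduce the whole statement to the single scalar identity (2.5), $\Delta F + |\nabla F|^2 = C_0$, which the excerpt has already derived by tracing the soliton equation (1.1) to get $R = \Delta F$ and combining it with Hamilton's identity (2.3), $R + |\nabla F|^2 = C_0$. Everything then comes down to showing that on a closed manifold (2.5) forces $F$ to be constant, after which $R_{ij} = \nabla_i\nabla_j F = 0$ is immediate.

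First I would pin down the constant $C_0$. Multiplying (2.5) by the strictly positive function $e^F$ turns the left-hand side into a pure Laplacian:
$$\Delta(e^F) = \big(\Delta F + |\nabla F|^2\big)\, e^F = C_0\, e^F.$$
Integrating over the closed manifold $M$ and applying the divergence theorem to kill the left-hand side gives $0 = C_0 \int_M e^F$; since $e^F > 0$ everywhere, this forces $C_0 = 0$.

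With $C_0 = 0$, equation (2.5) says precisely that $e^F$ is a harmonic function on a closed manifold, hence constant; equivalently, integrating (2.5) directly gives $\int_M |\nabla F|^2 = -\int_M \Delta F = 0$, so $\nabla F \equiv 0$. Either way $F$ is constant, and then the soliton equation (1.1) yields $R_{ij} = 0$, i.e., $g_{ij}$ is Ricci flat — in particular Einstein.

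I do not expect a genuine obstacle here: the argument is essentially a one-line integration by parts. The only step that requires any thought is the vanishing of $C_0$, and this is exactly where compactness is used — on a noncompact manifold $\int_M e^F$ need not converge and there is no divergence theorem without boundary contributions at infinity. Indeed the cigar soliton and the Bryant soliton show that the conclusion genuinely fails in the noncompact setting, which is precisely what makes the rest of the paper necessary.
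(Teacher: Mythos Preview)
Your argument is correct and essentially identical to the paper's: the paper also multiplies (2.5) by $e^{F}$, integrates $\Delta(e^{F})$ over the closed manifold to force $C_0=0$, and then integrates (2.5) once more to conclude $\int_M |\nabla F|^2 = 0$. Your additional observation that $e^{F}$ is harmonic (hence constant) once $C_0=0$ is a valid alternative for the last step, but otherwise the two proofs coincide line by line.
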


 Complete noncompact steady solitons do exist and here are some basic examples: 

\begin{example}{\bf (The cigar soliton)}
In dimension two, Hamilton \cite{Ha88} discovered the first
example of a complete noncompact steady soliton on $\mathbb R^2$,
called the {\it cigar soliton}, where the metric is given by
$$ ds^2=\frac {dx^2 +dy^2} {1+x^2+y^2}$$ with potential function
$$F=\log (1+x^2+y^2).$$
The (scalar) curvature of the cigar soliton is given by 
$$R=\frac{1}{1+x^2+y^2}.$$ Hence it is positive, attains its maximum at the origin, and decays to zero exponentially fast
(in terms of the geodesic distance) at space infinity. Furthermore, the cigar soliton has linear volume
growth, and is asymptotic to a cylinder of finite circumference at
$\infty$.
\end{example}

\begin{example}{\bf (The Bryant soliton)}
In the Riemannian case, higher dimensional examples of noncompact
gradient steady solitons were found by Robert Bryant \cite{Bryant} on
$\Bbb R^n$ ($n\geq 3$). They are rotationally symmetric and have
positive sectional curvature. Furthermore, the geodesic sphere
$S^{n-1}$ of radius $s$ has the diameter on the order $\sqrt{s}$.
Thus the volume of geodesic balls $B_r(0)$ grow on the order of
$r^{(n+1)/2}$.
\end{example}

\begin{example}{\bf (Noncompact gradient steady K\"ahler solitons)}
In the K\"ahler case, the first author \cite{Cao94} found two examples
of complete rotationally noncompact gradient steady K\"ahler-Ricci
solitons

(a) On $\Bbb C^n$ (for $n=1$ it is just the cigar soliton). These
examples are $U(n)$ invariant and have positive sectional
curvature. It is interesting to point out that the geodesic sphere
$S^{2n-1}$ of radius $s$ is an $S^1$-bundle over $\Bbb CP^{n-1}$
where the diameter of $S^1$ is on the order $1$, while the
diameter of $\Bbb CP^{n-1}$ is on the order $\sqrt{s}$. Thus the
volume of geodesic balls $B_r(0)$ grow on the order of $r^{n}$,
$n$ being the complex dimension. Also, the curvature $R(x)$ decays
like $1/r$.

(b) On the blow-up of $\Bbb C^n/\Bbb Z_n$ at the origin. This is
the same space on which Eguchi-Hansen  ($n=2$) and Calabi
 ($n\geq 2$) constructed examples of Hyper-K\"ahler
metrics. For $n=2$, the underlying space is the canonical line
bundle over $\mathbb{CP}^1$.
\end{example}

Now a Ricci flat metric is clearly a stationary  solution of Hamilton's Ricci flow
$$\frac{\partial g_{ij}(t)}{\partial t}=-2R_{ij}(t).$$ This happens, for example, on a flat torus or on any
$K3$-surface with a Calabi-Yau metric.

On the other hand, suppose that we have a complete steady Ricci soliton $g_{ij}$ on a smooth manifold $M^n$ 
with potential function $F$. As observed by  Z.-H.
Zhang \cite{Zh1}, the gradient vector field $V=\nabla F$ is a complete vector field on
$M$. Let $\varphi_t$ denote the one-parameter group
of diffeomorphisms  of $M^n$ generated by $-V$. Then it easily follows that 
$$\tilde{g}_{ij}(t)=\varphi^*_t g_{ij} $$ is a solution to the Ricci flow for $ -\infty<t<\infty$, with $\tilde{g}_{ij}(0)=g_{ij}$.

Next we present a useful result, which was implicitly proved by B.-L. Chen \cite{BChen}.  For the reader's convenience,
we include a proof here (see also Proposition 5.5 in \cite{Cao08b}). 

\begin{proposition}  Let $g_{ij}(t)$ be a complete ancient
solution to the Ricci flow on a noncompact manifold $M^n$. Then the scalar curvature
$R$ of $g_{ij}(t)$ is nonnegative for all $t$.
\end{proposition}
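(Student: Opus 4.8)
The plan is to use the maximum principle together with the evolution equation for scalar curvature under Ricci flow, exploiting the fact that an ancient solution has existed for all time in the infinite past. Recall that under Ricci flow the scalar curvature satisfies
$$\frac{\partial R}{\partial t} = \Delta R + 2|R_{ij}|^2 \ge \Delta R + \frac{2}{n} R^2.$$
The idea is that this reaction term $\tfrac{2}{n}R^2$ forces $R$ to blow up in finite backward time if it were ever to become sufficiently negative somewhere, which contradicts the fact that the solution exists on all of $(-\infty, t]$. The delicate point is that on a noncompact manifold one cannot directly apply the maximum principle, since $R$ need not attain a minimum and may not be bounded; this is precisely the main obstacle.

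First I would address the compact case (or the case of bounded curvature) as a warm-up: if $R$ attains a negative minimum $R_{\min}(t_0) = -\delta < 0$ at some time $t_0$, then the ODE comparison $\frac{d}{dt}R_{\min} \ge \tfrac{2}{n}R_{\min}^2$ run backwards shows $R_{\min}(t) \to -\infty$ as $t \downarrow t_0 - \tfrac{n}{2\delta}$, which is impossible on an ancient solution. So in that setting $R \ge 0$ follows immediately. The real work is to remove the compactness/boundedness assumption.

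For the noncompact case I would localize. Fix a point $p \in M$ and a time $t_1$, and suppose for contradiction that $R(p, t_1) = -\alpha < 0$. Choose a smooth cutoff function $\phi$ supported in a large ball $B_{g(t_1)}(p, A)$ that is $1$ on $B_{g(t_1)}(p, A/2)$, and consider the function $u = R\phi$ or, better, apply the localization technique of the proof of the Li--Yau--Hamilton type estimates: introduce the quantity $F_A = (\text{something like } \phi R)$ and derive a differential inequality that, after absorbing the gradient and Laplacian terms coming from $\phi$ using the bound $|\nabla \phi|^2/\phi \le C/A^2$ and $|\Delta \phi| \le C/A^2$ (with respect to $g(t_1)$, and controlling how the metric changes over a short time interval), still retains a dominant $+\tfrac{2}{n}R^2$ reaction term at points where $R$ is very negative. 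Running this localized inequality backward over a fixed time interval $[t_1 - \tau, t_1]$, one concludes that the localized minimum of $R$ cannot be too negative, with a bound independent of $A$ once $A$ is large; letting $A \to \infty$ then gives $R(p,t_1) \ge -C/\tau$ for every $\tau > 0$ for which the solution exists backward, and since the solution is ancient we may take $\tau \to \infty$ to obtain $R(p,t_1) \ge 0$.

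The step I expect to be the main obstacle is making the backward-in-time localized maximum principle argument rigorous on the noncompact manifold: one must carefully track the time-dependence of the cutoff function (the metric, hence the ball $B_{g(t)}(p,A)$ and the quantities $|\nabla\phi|$, $\Delta\phi$, all evolve), ensure the argument only uses the curvature on a compact set over a finite time interval so that a genuine spatial minimum is attained there, and verify that the error terms introduced by the cutoff are genuinely $O(1/A^2)$ uniformly over the relevant time interval. This is exactly the kind of localization carried out by B.-L. Chen; the cleanest route is to follow his argument, and I would expect the proof in the paper to reproduce or cite that localized computation, reducing everything to the ODE comparison $\frac{d}{dt}R_{\min} \ge \tfrac{2}{n}R_{\min}^2 + (\text{controlled errors})$ and the observation that an ancient solution leaves no room for a finite-time backward blow-up.
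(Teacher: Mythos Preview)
Your proposal is correct and follows essentially the same approach as the paper: the paper reproduces B.-L.\ Chen's localized maximum principle argument by applying a cutoff $\phi(d_t(x_0,x)/Ar_0)$ to $R$, deriving the differential inequality $\frac{d}{dt}u_{\min}\ge \frac{1}{n}u_{\min}^2 - C^2/(Ar_0^2)^2$, integrating forward from an initial time, and then shifting the initial time back to $-\infty$ and letting $A\to\infty$. The one specific technical device you allude to but do not name is Perelman's Lemma~8.3(a), which controls $(\partial_t-\Delta)d_t$ outside a small ball on which $|Rc|$ is bounded; this is precisely what handles the time-dependence of the cutoff that you correctly flagged as the main obstacle.
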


\begin{proof} Suppose $g_{ij}(t)$ is defined for $-\infty <t\leq T$
for some $T>0$. We divide the argument into
two steps:

Step 1: Consider any complete solution $g_{ij}(t)$ defined on $[0,
T]$. For any fixed point $x_0\in M$, pick $r_0> 0$ sufficiently
small so that
$$|Rc|(\cdot, t)\leq (n-1) r_0^{-2} \qquad \mbox{on} \ B_t(x_0, r_0)$$ for
all $t\in [0, T]$. Then for any positive number $A>2$, pick
$K_A>0$ such that $R\geq -K_A$ on $B_0(x_0, Ar_0)$ at $t=0$. We
claim that there exists a universal constant $C>0$ (depending on
the dimension $n$) such that
$$R(\cdot, t)\geq \min\{-\frac{n} {t+\frac{1}{K_A}},
-\frac{C}{Ar_0^2}\} \qquad \mbox{on} \ B_t(x_0, \frac{3A}{4}r_0)
\eqno(2.6)$$ for each $t\in [0,T]$.

Indeed, take a smooth nonnegative decreasing function $\phi$ on
$\Bbb R$ such that $\phi =1$ on $(-\infty,7/8]$, and $\phi=0$ on
$[1, \infty)$. Consider the function $$u(x, t)=\phi(\frac{d_t(x_0,
x)} {Ar_0}) R(x, t),$$ where $d_t(x_0, x)$ is the distance function from $x_0$ to $x$ at time $t$. Then we have
$$(\frac{\partial}{\partial t}-\Delta)u=\frac{\phi' R}{Ar_0}
(\frac{\partial}{\partial t}-\Delta)d_t(x_0,
x)-\frac{\phi''R}{(Ar_0)^2} +2\phi |Rc|^2-2\nabla \phi\cdot\nabla
R$$ at smooth points of  $d_t(x_0, \cdot)$.

Let $u_{\min}(t)=\min_{M} u(\cdot, t)$. Whenever
$u_{\min}(t_0)\leq 0$, assume $u_{\min}(t_0)$ is achieved at some
point $\bar x\in B_{t_0}(x_0, r_0)$, then $\phi' R(\bar x,
t_0)\geq 0$. On the other hand, by Lemma 8.3(a) of Perelman
\cite{P1} (see also Lemma 3.4.1 (i) of \cite{CaoZhu}), we know that
$$(\frac{\partial}{\partial t}-\Delta)d_t(x_0, x)\geq -\frac{5(n-1)}{3r_0}$$
outside $B_t(x_0, r_0)$. Following (Section 3, Hamilton
\cite{Ha86}), we define
$$\left.\frac{d}{dt}\right|_{t=t_0} u_{\min}=\liminf_{h\to 0^+}
\frac{u_{\min}(t_0+h)-u_{\min}(t_0)}{h},$$ the liminf of all
forward difference quotients, then

$$ \left.\frac{d}{dt}\right|_{t=t_0}
u_{\min}\geq -\frac{5(n-1)}{3Ar_0^2}\phi' R+ \frac{2}{n}\phi R^2 +
\frac{1}{(Ar_0)^2} (\frac{2{\phi'}^2}{\phi}-\phi'')R .$$ Hence,

$$\left.\frac{d}{dt}\right|_{t=t_0} u_{\min} \geq \frac{1}{n}u_{\min}^{2}(t_0)
-\frac{C^2}{(Ar_0^2)^2}$$ provided $u_{\min}(t_0)\leq 0$. Now
integrating the above inequality, we get
$$u_{\min}(t)\geq \min\{-\frac{n} {t+\frac{1}{K_A}},
-\frac{C}{Ar_0^2}\} \qquad \mbox{on} \ B_t(x_0, \frac{3A}{4}r_0),
$$ and the inequality (2.6) in our claim follows.

Step 2: Now if our solution $g_{ij}(t)$ is ancient, we can replace
$t$ by $t-\alpha$ in (2.6) and get
$$R(\cdot, t)\geq \min\{-\frac{n} {t-\alpha +\frac{1}{K_A}},
-\frac{C}{Ar_0^2}\} \qquad \mbox{on} \ B_t(x_0, \frac{3A}{4}r_0).$$ Letting $A\to \infty$ and then $\alpha \to -\infty$,
we see that $R(\cdot, t)\ge 0$ for all $t$. This completes the proof of Proposition 2.2.

\end{proof}

As an immediate corollary, we have 

\begin{lemma}
Let $(M^n, g_{ij}, F)$ be a complete gradient steady soliton. Then it has nonnegative
scalar curvature $R\ge 0$.
\end{lemma}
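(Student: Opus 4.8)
The plan is to deduce this as an immediate corollary of Proposition 2.2 together with the soliton-to-Ricci-flow correspondence recalled just above it. First I would dispose of the compact case: if $M^n$ is compact, then by Proposition 2.1 the metric $g_{ij}$ is Ricci flat, hence $R\equiv 0$ and the conclusion is trivial.

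So assume $M^n$ is noncompact. Following the observation of Z.-H. Zhang recalled above, the gradient vector field $V=\nabla F$ is complete, and if $\varphi_t$ denotes the one-parameter group of diffeomorphisms of $M^n$ generated by $-V$, then $\tilde g_{ij}(t)=\varphi^*_t g_{ij}$ is a complete solution to the Ricci flow for all $t\in(-\infty,\infty)$, with $\tilde g_{ij}(0)=g_{ij}$. In particular $\tilde g_{ij}(t)$ is a complete ancient (indeed eternal) solution on the noncompact manifold $M^n$, so Proposition 2.2 applies and tells us that the scalar curvature of $\tilde g_{ij}(t)$ is nonnegative for every $t$. Since $\varphi_t$ is a diffeomorphism and scalar curvature is a pointwise Riemannian invariant, the scalar curvature of $\tilde g_{ij}(0)=g_{ij}$ equals that of $\tilde g_{ij}(t)$ transported by $\varphi_t$; taking $t=0$ (or equally well any $t$) therefore yields $R\ge 0$ for $g_{ij}$ itself.

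There is essentially no obstacle here, as the real work is all contained in Proposition 2.2. The only points meriting a word of care are the following: one must split into the compact and noncompact cases, since Proposition 2.2 is stated only for noncompact manifolds; and one must observe that the steady soliton structure genuinely produces a solution defined for all negative times (in fact all times), so that the hypothesis "ancient" in Proposition 2.2 is legitimately satisfied. Both of these are supplied by material already recorded in the excerpt, so the proof is short.
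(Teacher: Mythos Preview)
Your proposal is correct and matches the paper's approach exactly: the paper states Lemma 2.2 ``as an immediate corollary'' of Proposition 2.2 together with the soliton-to-ancient-solution correspondence recorded just before it. Your explicit treatment of the compact case via Proposition 2.1 is a sensible addition, since Proposition 2.2 is stated only for noncompact manifolds.
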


We remark that Munteanu-Sesum \cite{MS} recently proved a Liouville type theorem for gradient steady solitons, namely a 
gradient steady soliton does not admit any non-trivial harmonic
function with finite Dirichlet energy. As a consequence, a gradient steady soliton has at most one nonparabolic end. 

For steady solitons with nonnegative Ricci curvature, we have the following 

\begin{lemma} {\bf (Hamilton \cite{Ha95F})}
Let $(M^n, g_{ij}, F)$ be a complete gradient steady soliton with nonnegative Ricci curvature $Rc\ge 0$ and assume
the scalar curvature $R$ attains its maximum at some point $x_0$. Then the potential function $F$ is weakly convex
and attains  its minimum at $x_0$.
\end{lemma}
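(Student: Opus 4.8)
The plan is to read off both statements from the soliton equation (1.1) together with Lemma 2.1. Weak convexity is immediate: by (1.1) and $Rc\ge 0$ one has $\nabla_i\nabla_j F=R_{ij}\ge 0$, so the Hessian of $F$ is positive semidefinite everywhere; equivalently, for every unit-speed geodesic $\sigma$ the function $F\circ\sigma$ satisfies $(F\circ\sigma)''=R_{ij}\dot\sigma^i\dot\sigma^j\ge 0$, so $F$ is convex along geodesics.

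The next point is that a convex function attains its minimum at any critical point: if $\nabla F(x_0)=0$, then for each unit-speed geodesic $\sigma$ with $\sigma(0)=x_0$ we have $(F\circ\sigma)'(0)=\langle\nabla F(x_0),\dot\sigma(0)\rangle=0$, and $(F\circ\sigma)'$ is nondecreasing by convexity, so $F\circ\sigma\ge F(x_0)$; as $M$ is complete and connected this gives $F\ge F(x_0)$ on all of $M$. (Conversely, any minimum point is critical.) So the lemma reduces to showing $\nabla F(x_0)=0$.

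To prove $\nabla F(x_0)=0$ I would use both conclusions of Lemma 2.1. Since $R$ is maximal at $x_0$ we have $\nabla R(x_0)=0$, and then (2.2) gives $R_{ij}(x_0)\nabla_j F(x_0)=0$, i.e. $\nabla F(x_0)$ lies in the kernel of the Ricci tensor at $x_0$; when $Rc(x_0)$ is positive definite — in particular in the setting of Theorem 1.1 — this already forces $\nabla F(x_0)=0$. In general I would flow along $\nabla F$: by (2.3) the function $|\nabla F|^2=C_0-R$ attains its global minimum at $x_0$, and $\nabla F$ is a complete vector field (recalled above), so its integral curves $\alpha(t)$ exist for all $t\in\mathbb{R}$; along such a curve
$$\frac{d}{dt}\,|\nabla F|^2(\alpha(t))=\langle\nabla|\nabla F|^2,\nabla F\rangle=2R_{ij}\nabla_i F\,\nabla_j F\ge 0,$$
so $|\nabla F|^2$ is nondecreasing along $\alpha$. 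Since it is globally minimized at $\alpha(0)=x_0$, it must be constant — equal to its minimum — on $\alpha|_{(-\infty,0]}$; hence $R_{ij}\dot\alpha^i\dot\alpha^j\equiv 0$ there, so $\dot\alpha$ lies in the kernel of $Rc$, $\nabla_{\dot\alpha}\dot\alpha=R_{ij}\nabla_j F=0$, and that half of $\alpha$ is a geodesic ray along which $F$ is affine. If $\nabla F(x_0)\ne 0$, this yields a geodesic ray from $x_0$ on which $F\to-\infty$ while $R$ stays equal to $\sup_M R$.

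The main obstacle is exactly to exclude this last configuration; this is the one place where completeness of $M$, rather than the pointwise identities alone, is needed — for instance by producing an honest line and invoking a splitting-type argument (using $Rc\ge 0$), or by invoking the curvature positivity assumed in the application. Once $\nabla F(x_0)=0$ is established, the second paragraph gives $F\ge F(x_0)$ on $M$, so $F$ attains its minimum at $x_0$, which completes the proof.
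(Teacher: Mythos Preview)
The paper does not prove this lemma; it is quoted from Hamilton \cite{Ha95F} and then immediately strengthened to Proposition~2.3 under the additional hypothesis $Rc>0$, which is the only version actually used later. So there is no proof in the paper to compare your attempt against; one can only assess the argument on its own merits.

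Your derivation of weak convexity from (1.1) and your reduction of the second conclusion to $\nabla F(x_0)=0$ are correct and are the natural steps. The gap you isolate in the final paragraph is genuine and, in fact, cannot be closed at the stated level of generality: on $(\mathbb{R}^n,\delta)$ with $F(x)=\langle a,x\rangle$ for some $a\neq 0$ one has $Rc\equiv 0\ge 0$ and $R\equiv 0$ attains its maximum at every point, yet $F$ has no minimum. This is exactly the ``geodesic ray with $F$ affine and $R\equiv R_{\max}$'' configuration that your backward-flow analysis produces, so no splitting argument will rescue it---even after splitting off a line the potential can remain affine along the $\mathbb{R}$ factor. Of the two fixes you propose, only the second works: under $Rc(x_0)>0$ (in particular in the setting of Theorem~1.1 and Proposition~2.3), equation (2.2) gives $R_{ij}(x_0)\nabla_jF(x_0)=0$, positivity forces $\nabla F(x_0)=0$, and your second paragraph then finishes the proof. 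That is essentially the route the paper takes in proving Proposition~2.3.
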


Moreover, if  the Ricci curvature of $(M^n, g_{ij}, F)$ is assumed to be positive,  then Lemma 2.3 can be strengthened to the following\footnote{See also 
Remark 5.5 in \cite{Cao08b}, as well as Lemma 3.2 in \cite{Chu} .}

\begin{proposition} Let $(M^n, g_{ij}, F)$ be a complete noncompact
gradient steady soliton with positive Ricci curvature $Rc>0$. Assume the scalar curvature $R$ attains its maximum at some origin
$x_0$. Then, there exist some constants $0<c_1\leq \sqrt{C_0}$ and $c_2>0$  such that the potential function $F$ satisfies the estimates
$$c_1r(x)-c_2 \leq F(x) \leq \sqrt{C_0} r(x) + |F(x_0)|, \eqno(2.7)$$
where $r(x)=d(x_0, x)$ is the distance function from  $x_0$, and $C_0=R_{max}$ is the constant in (2.2).
In particular, $F$ is a strictly convex exhaustion function achieving its minimum at the only critical point  $x_0$, and the underlying manifold
$M^n$ is diffeomorphic to $\mathbb{R}^n$.

\end{proposition}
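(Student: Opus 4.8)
The plan is to read both estimates in (2.7) off the two identities of Lemma~2.1, namely $R+|\nabla F|^2=C_0$ and $\nabla_iR=-2R_{ij}\nabla_jF$, combined with the sign hypothesis $Rc>0$, and then to obtain the topological statement from Morse theory. The upper bound is immediate: by Lemma~2.2 we have $R\ge0$, so $|\nabla F|^2=C_0-R\le C_0$ on all of $M$; integrating $|\nabla F|$ along a unit-speed minimizing geodesic $\gamma$ from $x_0$ to $x$ gives $F(x)-F(x_0)=\int_0^{r(x)}\langle\nabla F,\gamma'\rangle\,ds\le\sqrt{C_0}\,r(x)$, which is the right-hand inequality.

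The lower bound is where any real work lies. First I would check that $x_0$ is a critical point of $F$: since $R$ is maximal at $x_0$, $\nabla R(x_0)=0$, and then (2.2) gives $R_{ij}(x_0)\nabla_jF(x_0)=0$, so $\nabla F(x_0)=0$ because $Rc(x_0)$ is positive definite. Now fix $x$ with $r(x)\ge1$, let $\gamma\colon[0,r(x)]\to M$ be a unit-speed minimizing geodesic from $x_0$ to $x$, and put $\phi(s)=F(\gamma(s))$. Since $\gamma$ is a geodesic and $\nabla_i\nabla_jF=R_{ij}$, we have $\phi''(s)=Rc(\gamma'(s),\gamma'(s))>0$, while $\phi'(0)=\langle\nabla F(x_0),\gamma'(0)\rangle=0$; hence $\phi$ is strictly convex and $\phi'$ is positive and strictly increasing on $(0,r(x)]$. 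The quantitative input is compactness: on the closed ball $\overline{B}(x_0,1)$ the continuous, everywhere positive function $(p,v)\mapsto Rc_p(v,v)$ attains a positive minimum $\lambda>0$ over unit vectors $v$. As $\gamma|_{[0,1]}$ lies in $\overline{B}(x_0,1)$, it follows that $\phi'(1)=\int_0^1Rc(\gamma',\gamma')\,ds\ge\lambda$, hence $\phi'(s)\ge\lambda$ for all $s\ge1$, and therefore $F(x)=\phi(r(x))\ge\phi(1)+\lambda(r(x)-1)\ge F(x_0)+\lambda(r(x)-1)$. Absorbing the behavior on the bounded set $\{r(x)\le1\}$ into the constant and setting $c_1=\min\{\lambda,\sqrt{C_0}\}$ yields $F(x)\ge c_1r(x)-c_2$ for a suitable $c_2>0$. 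This is the one step where strict positivity $Rc>0$, rather than merely $Rc\ge0$, is essential --- it is what forces $\phi'$ to become, and stay, bounded below by a fixed positive constant once one leaves a fixed ball.

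With (2.7) in hand, $F$ is a proper exhaustion function; and since $\nabla_i\nabla_jF=R_{ij}$ is positive definite everywhere, $F$ is strictly convex, so it has at most one critical point (if $p\neq q$ were both critical, then $F$ restricted to a minimizing geodesic between them would have derivative zero at both endpoints, contradicting strict convexity), which must therefore be $x_0$; and at $x_0$ the Hessian $Rc(x_0)>0$ is nondegenerate, so $x_0$ is a nondegenerate minimum. Finally, a proper smooth function on $M^n$ that is bounded below and has exactly one, nondegenerate, critical point has all of its sublevel sets diffeomorphic to closed $n$-balls, so by standard Morse theory $M^n$ is diffeomorphic to $\mathbb{R}^n$. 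I expect the main obstacle to be exactly the uniform lower bound on $\phi'$ in the second paragraph; the upper bound, the convexity, and the Morse-theoretic conclusion are all routine once the linear growth of $F$ has been established.
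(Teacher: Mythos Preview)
Your proof is correct and follows essentially the same route as the paper: the upper bound from $|\nabla F|^2\le C_0$, and the lower bound by integrating $\phi''(s)=Rc(\gamma',\gamma')$ along a minimizing geodesic from $x_0$ and using the positive minimum of $Rc$ on the closed unit ball to get $\phi'(s)\ge c_1$ for $s\ge 1$. You supply more detail than the paper does (the explicit verification that $\nabla F(x_0)=0$ via $\nabla R(x_0)=0$ and invertibility of $Rc(x_0)$, and the Morse-theoretic conclusion), but the argument is the same.
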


\begin{proof}
It is clear that the upper estimate in (2.7) in fact holds for complete gradient steady solitons in general,
because $|\nabla F|^2\leq C_0$ by (2.3) and Lemma 2.2.

To prove the lower estimate, we consider any minimizing normal geodesic $\gamma(s)$, $0\leq s \leq
s_0$ for large $s_0>0$, starting from the origin
$x_0=\gamma(0)$. Denote by $X(s)=\dot\gamma(s)$, the unit tangent
vector along $\gamma$, and $\dot{F}=\nabla_XF(\gamma(s))$. By (1.1), we have
$$\nabla_X\dot{F}= \nabla_X\nabla_XF= Rc(X,X). \eqno(2.8) $$
Integrating (2.8) along $\gamma$ and noting that $x_0$ is the (unique) minimum point of $F$,
we get, for $s\ge 1$,

\begin{align*}
\dot{F}(\gamma(s))
=\int_0^{s} Rc(X,X)ds \geq \int_0^{1} Rc(X,X)ds \geq c_1,
\end{align*}
where $c_1>0$ is the least eigenvalue of  $Rc$ on the unit geodesic ball $B_{x_0}(1)$.
Thus,
$$ F(\gamma(s_0)) =\int_1^{s_0} \dot{F}(\gamma(s)) ds + F(\gamma(1))\geq c_1s_{0}-c_1 +F(\gamma(1)).$$

\end{proof}

Now we turn our attention to locally conformally flat steady Ricci solitons.  For any Riemannian manifold $(M^n, g)$, let
\begin{align*}
W_{ijkl}  = & R_{ijkl} - \frac{1}{n-2}(g_{ik}R_{jl}-g_{il}R_{jk}-g_{jk}R_{il}+g_{jl}R_{ik})\\
& + \frac{R}{(n-1)(n-2)} (g_{ik}g_{jl}-g_{il}g_{jk}) 
\end{align*}
be the Weyl tensor, and denote by
 $$C_{ijk}=\nabla_k R_{ij}-\nabla_j R_{ik}-\frac {1}{2(n-1)} ( g_{ij} \nabla_k R - g_{ik} \nabla_j R) \eqno(2.9)$$
 the Cotton tensor.  It is well known that, for $n=3$, $W_{ijkl}$ vanishes identically while $C_{ijk}=0$ if and only if
$(M^3, g_{ij})$ is locally conformally flat; for $n\ge 4$, $W_{ijkl}=0$ if and only if  $(M^n, g_{ij})$ is locally
conformally flat.  Moreover, for $n\ge 4$, the vanishing of Weyl tensor $W_{ijkl}$ implies the vanishing of the Cotton
tensor $C_{ijk}$, while $C_{ij k} = 0 $ corresponds to the Weyl
tensor being harmonic.

In proving Theorem 1.2, we need the following important facts due to B.-L. Chen \cite {BChen} (for n=3)
and Z.-H. Zhang \cite{Zh2} (for $n\ge 4$):

\begin{proposition} Let $(M^n, g_{ij}, F)$ be a complete gradient steady Ricci soliton. Then
$(M^n, g_{ij}, F)$ has nonnegative curvature operator $Rm\ge 0$, provided either

(a)  $n=3$,  or

\smallskip

(b)  $n\ge 4$ and  $(M^n, g_{ij}, F)$ is locally conformally flat.

\end{proposition}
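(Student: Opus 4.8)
The plan is to reduce the statement to the corresponding fact about ancient solutions of the Ricci flow and then invoke the pinching estimates of B.-L. Chen and Z.-H. Zhang. Recall from the discussion preceding Proposition 2.2 that, following Z.-H. Zhang \cite{Zh1}, a complete gradient steady soliton $(M^n,g_{ij},F)$ generates a complete \emph{eternal} solution $\tilde g_{ij}(t)=\varphi_t^*g_{ij}$, $-\infty<t<\infty$, of the Ricci flow, where $\varphi_t$ is the one-parameter group generated by $-\nabla F$. In particular $\tilde g_{ij}(t)$ is ancient; moreover each $\tilde g_{ij}(t)$ is isometric to $g_{ij}$, so all pointwise curvature quantities — the scalar curvature $R$, the least eigenvalue $\nu$ of the curvature operator, and the Weyl tensor $W_{ijkl}$ — are independent of $t$ and agree with those of $g_{ij}$. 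By Lemma 2.2 we already have $R\ge 0$ along this flow, so it remains to promote this to $Rm\ge 0$.

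First I would run a Hamilton--Ivey type pinching estimate backwards in time. Suppose for contradiction that $\nu(x_1)<0$ at some $x_1\in M^n$; after rescaling the metric we may take $\nu(x_1)<-1$. In case (a), $n=3$, the point of B.-L. Chen's argument \cite{BChen} is that the classical Hamilton--Ivey pinching inequality (\cite{Ha95F}, \cite{Iv}) can be localized so as \emph{not} to require bounded curvature or compactness: applied to the restriction of $\tilde g_{ij}(t)$ to $[t_0,0]$ for any $t_0<0$, it yields, at every space-time point where $\nu<0$, an estimate of the schematic form
$$R\ \ge\ (-\nu)\bigl(\log(-\nu)+\log(1+t-t_0)-3\bigr).$$
Evaluating at $(x_1,0)$ and letting $t_0\to-\infty$ forces the right-hand side to $+\infty$ while the left-hand side stays equal to the finite number $R(x_1)$, a contradiction; hence $\nu\ge 0$ everywhere, i.e. $Rm\ge 0$. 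In case (b), $n\ge 4$ with $(M^n,g_{ij})$ locally conformally flat, each $\tilde g_{ij}(t)$ is again locally conformally flat since this is a diffeomorphism-invariant condition, so $W_{ijkl}\equiv 0$ along the flow; then the curvature operator is determined by the Schouten tensor, its eigenvalues being the pairwise sums of the Schouten eigenvalues, and Z.-H. Zhang \cite{Zh2} established the analogue of Chen's localized pinching in this setting. The same backward-in-time limit shows that the two smallest Schouten eigenvalues sum to a nonnegative number everywhere, which is exactly $Rm\ge 0$.

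The only genuinely non-routine ingredient is the curvature-unbounded, local form of the Hamilton--Ivey estimate. The classical estimate comes from the maximum principle applied to the curvature reaction ODE and presupposes compactness or a curvature bound, neither of which is known a priori for a general steady soliton; removing that hypothesis is precisely what Chen proves for $n=3$ and Zhang for $n\ge 4$, and those are the results being quoted here. Granting them, the proof is short: it consists of the reduction to the eternal Ricci flow above, the observation that $R\ge 0$, and the $t_0\to-\infty$ limit in the pinching estimate. Accordingly, the write-up will be a citation of \cite{BChen} and \cite{Zh2} together with this limiting argument, rather than a reproof of the pinching estimates themselves.
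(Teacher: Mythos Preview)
Your proposal is correct and follows exactly the paper's approach: the paper does not prove Proposition 2.4 but simply attributes (a) to B.-L. Chen \cite{BChen} for $3$-dimensional complete ancient solutions and (b) to Z.-H. Zhang \cite{Zh2} for ancient solutions with vanishing Weyl tensor, after the same reduction of the steady soliton to an eternal Ricci flow that you describe. Your write-up merely adds a helpful sketch of the Hamilton--Ivey pinching mechanism behind those cited results.
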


\begin{remark} Part (a) is a special case of a more general result due to B.-L. Chen \cite{BChen} for 3-dimensional complete
ancient solutions.  Part  (b) is essentially proved by Z.-H. Zhang \cite{Zh2}. In fact, the same arguments in \cite{Zh2} imply that a
complete ancient solution $g_{ij}(t)$ to the Ricci flow with vanishing Weyl tensor for each time $t$ is necessarily of
nonnegative curvature operator.
\end{remark}

Combining Proposition 2.4 with (2.3) in Lemma 2.1,  we have

\begin{proposition} Let $(M^n, g_{ij}, F)$ be a complete gradient steady soliton such that
 either $n=3$, or  $n\ge 4$ and  $(M^n, g_{ij}, F)$ is locally conformally flat. Then
 $(M^n, g_{ij}, F)$ has bounded  and nonnegative curvature operator $0\le Rm \le C$.

\end{proposition}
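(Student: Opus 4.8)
The plan is to derive this directly from Proposition 2.4 and the conservation law (2.3) of Lemma 2.1; under either hypothesis $(a)$ or $(b)$ no further geometry is needed. By Proposition 2.4 the curvature operator $Rm$ is nonnegative (so in particular $R\ge 0$, in agreement with Lemma 2.2), and therefore only the upper bound $Rm\le C$ remains to be established.

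First I would bound the scalar curvature. By (2.3) we have $R+|\nabla F|^2=C_0$ for a fixed constant $C_0$, and since $|\nabla F|^2\ge 0$ this forces
$$0\le R\le C_0 \qquad \text{on } M^n.$$
Next, to upgrade a bound on $R$ to a bound on the whole curvature tensor, I would use that $Rm$ is a self-adjoint operator on $2$-forms all of whose eigenvalues are nonnegative (Proposition 2.4); hence each eigenvalue is at most the trace of $Rm$, which is a fixed dimensional multiple of the scalar curvature (with the standard normalization $\operatorname{tr}Rm=R/2$). Combining this with the bound on $R$ gives
$$0\le Rm\le \tfrac{1}{2}C_0$$
pointwise, so one may take $C=\tfrac{1}{2}C_0$ (up to the normalization convention for $Rm$), which proves the proposition.

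There is essentially no obstacle here: the content is entirely contained in Proposition 2.4 (the nonnegativity of $Rm$) and in the elementary fact that $R+|\nabla F|^2$ is constant on a gradient steady soliton. The passage from a bound on $R$ to a bound on $Rm$ is immediate precisely because $Rm\ge 0$, and would fail without that hypothesis; this is why local conformal flatness (or $n=3$) enters only through Proposition 2.4.
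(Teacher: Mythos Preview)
Your proof is correct and follows exactly the same route as the paper, which simply says the result follows by ``combining Proposition 2.4 with (2.3) in Lemma 2.1.'' You have merely spelled out the implicit step that, for a nonnegative self-adjoint operator, each eigenvalue is bounded by the trace $\operatorname{tr}Rm=R/2\le C_0/2$.
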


\section{The proofs of Theorem 1.1 and Theorem 1.2}

Throughout this section,  we assume that $(M^n, g_{ij}, F)$ ($n\ge
3$) is a complete noncompact locally conformally flat gradient
steady soliton.  We are going to prove Theorem 1.1, which is the same as Proposition 3.1 below, and Theorem 1.2. 

\begin{proposition} If $(M^n, g_{ij}, F)$, $n\ge
3$, is a $n$-dimensional complete noncompact locally conformally flat gradient steady soliton with positive sectional curvature, then  
$(M^n, g_{ij}, F)$ is a rotationally symmetric gradient staedy soliton on $\mathbb{R}^n$, hence isometric to the Bryant soliton.
\end{proposition}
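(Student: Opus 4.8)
The plan is to exploit local conformal flatness to turn the soliton equation into a rigid ODE system along the gradient flow of $F$. First I would use Proposition 2.7 together with Proposition 2.5 (which gives $0\le Rm\le C$, hence in particular bounded nonnegative Ricci and the existence of a maximum point $x_0$ for $R$) and Proposition 2.6 (which says $F$ is a strictly convex exhaustion function with a single critical point $x_0$, and $M^n$ is diffeomorphic to $\mathbb{R}^n$). The key structural input is that for a locally conformally flat metric the full curvature tensor $R_{ijkl}$ is determined algebraically by the Ricci tensor $R_{ij}$ and scalar curvature $R$ via the Schouten tensor; combined with $W_{ijkl}=0$ and the resulting vanishing of the Cotton tensor $C_{ijk}=0$, equation (2.1) becomes
\[
\nabla_iR_{jk}-\nabla_jR_{ik}=R_{ijkl}\nabla_lF,
\]
with the right-hand side now expressible purely in terms of Ricci, $R$, and $\nabla F$. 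Contracting this identity with $\nabla^iF$ and using (2.2) and $R+|\nabla F|^2=C_0$ should yield that the Ricci tensor is of the special form
\[
R_{ij}=\alpha\, g_{ij}+\beta\, \frac{\nabla_iF\,\nabla_jF}{|\nabla F|^2}
\]
away from $x_0$, i.e. the metric has (at most) two distinct Ricci eigenvalues, one in the direction $\nabla F$ and one on the orthogonal complement; this is the analytic heart of the rotational-symmetry statement.

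Next I would show that the level sets $\Sigma_c=\{F=c\}$ of $F$ are, for $c$ above the minimum value, smooth compact hypersurfaces (diffeomorphic to $S^{n-1}$, since $M\cong\mathbb{R}^n$ and $F$ is a proper strictly convex function with one critical point) which are umbilic. Umbilicity follows because the second fundamental form of $\Sigma_c$ is $\nabla_i\nabla_jF/|\nabla F|$ restricted to $T\Sigma_c$, and by the soliton equation this equals $R_{ij}/|\nabla F|$ restricted to $T\Sigma_c$, which by the two-eigenvalue structure above is a multiple of the induced metric. An umbilic hypersurface in a locally conformally flat manifold, together with the constancy of the relevant curvature quantities along $\Sigma_c$ (which I would establish by differentiating $R$, $|\nabla F|^2$, etc. along $\Sigma_c$ and using $C_{ijk}=0$ and (2.2)), forces each $\Sigma_c$ to be a round sphere and the metric to take the warped-product form
\[
g=dr^2+\varphi(r)^2\, g_{S^{n-1}}
\]
in geodesic coordinates emanating from $x_0$, where $r$ is the distance to $x_0$ and $F=F(r)$ is a function of $r$ alone. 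That is, $(M^n,g)$ is rotationally symmetric.

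Once rotational symmetry is in hand, the soliton system (1.1)–(2.5) reduces to a system of ODEs for $\varphi(r)$ and $F(r)$, and by Bryant's theorem (the uniqueness of the complete rotationally symmetric gradient steady soliton on $\mathbb{R}^n$, quoted in Section 1) the solution is, up to scaling, the Bryant soliton; positivity of sectional curvature is consistent with this and in fact the Bryant soliton is the unique such solution. I expect the main obstacle to be the step deriving the two-eigenvalue (hence umbilic) structure of the Ricci tensor: one must carefully manipulate the Cotton tensor identity $C_{ijk}=0$, contract (2.1) with $\nabla F$ in the right way, and rule out, using completeness, positivity of sectional curvature, and the behavior of $F$ near $x_0$, the possibility that the would-be ``radial'' eigenvalue degenerates or that extra eigenvalue splitting occurs. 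A secondary technical point is justifying the warped-product form globally (not just locally) across the single critical point $x_0$, for which the diffeomorphism $M^n\cong\mathbb{R}^n$ from Proposition 2.6 and a smoothness argument at $r=0$ are needed.
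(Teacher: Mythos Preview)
Your proposal is correct and follows essentially the same route as the paper: vanishing of the Cotton tensor $\Rightarrow$ two-eigenvalue Ricci structure with $\nabla F$ as eigenvector $\Rightarrow$ totally umbilic level sets $\Sigma_c$ with $G$, $R$, $H$ constant along them $\Rightarrow$ constant-curvature round spheres $\Rightarrow$ warped product $\Rightarrow$ Bryant. The one place where the paper is sharper than your sketch is precisely the step you flag as the main obstacle: rather than contracting (2.1) with $\nabla^i F$, the paper derives a closed formula (Lemma~3.1)
\[
|C_{ijk}|^2=\frac{1}{(n-2)^2}\Bigl(|R_{ik}\nabla_jF-R_{ij}\nabla_kF|^2-\frac{2}{n-1}\,\bigl|R\nabla F-\tfrac12\nabla G\bigr|^2\Bigr),
\]
and then, diagonalizing $Rc$ in an orthonormal frame, rewrites the right-hand side as $\sum_j|\nabla_jF|^2\sum_{i\neq j,\,k\neq j}(\lambda_i-\lambda_k)^2$ up to a positive constant; setting $C_{ijk}=0$ immediately forces the two-eigenvalue structure. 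Also, a small caution: Proposition~2.5 gives bounded curvature but not that $R$ attains its maximum, so for the topology $M^n\cong\mathbb{R}^n$ the paper invokes Gromoll--Meyer directly (and obtains existence of the unique critical point of $F$ from strict convexity plus the observation that no critical point would force a flat splitting).
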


\begin{proof}   First of all, since the sectional curvature of $(M^n, g_{ij}, F)$ is positive we know that $M^n$ is diffeomorphic to $\mathbb{R}^n$ by Gromoll-Meyer \cite{GM} 
(or by Proposition 2.3 if $R$ attains its maximum at some origin $x_0$). Moreover, since the Ricci curvature is positive,  the potential function $F$ is  strictly
convex, thus having at most one critical point. Secondly, if we denote by  $G=|\nabla F|^{2}, $ then in any neighborhood, where  $G\neq 0$, of
the equipotential hypersurface $$\Sigma_c=: \{x\in M: F(x)=c\} $$ of a
regular value $c$ of $F$,  we can express the metric
$ds^2=g_{ij}(x)dx^{i}dx^{j}$ as
$$ds^2=\frac {1}{G(F, \theta)}dF^2  + g_{ab}(F, \theta) d\theta^{a} d\theta^{b},
\eqno(3.1)$$
where $\theta=(\theta^{2}, \cdots, \theta^{n})$ denotes
intrinsic coordinates for $\Sigma_c$. It is clear that the key step in proving Proposition 3.1 is to show that in (3.1) we have $G=G(F)$, $g_{ab}=g_{ab}(F)$, and that ($\Sigma_c,g_{ab}$) is a
space form of positive curvature.   

Note that an  $n$-dimensional rotationally symmetric metric is of the form 
$$ds^2=\psi^2(t)dt^2+\varphi^2(t)\bar{g},$$
where $\bar g$ is the standard metric on the unit sphere $\mathbb{S}^{n-1}$,  
and the Ricci tensor of such a metric is given by
$$Rc=(n-1)(-\frac{\varphi_{tt}}{\varphi}+\frac{\varphi_t\psi_t}{\varphi\psi})dt^2 - (\frac{\varphi\varphi_{tt}}{\psi^2} +\frac{(n-2)\varphi^2_{t}}{\psi^2} - \frac{\varphi\varphi_t\psi_t}{\psi^3}+2-n)\bar g.$$
In particular, it has at most two distinct eigenvalues depending only on $t$.

We shall first derive a useful formula for the norm square of the Cotton tensor for steady solitons with vanishing Weyl tensor.  
This formula plays an important role in our derivation of the desired property of the Ricic tensor for the steady solton metric. Moreover, as $W_{ijkl}=0$ always when $n=3$, 
this formula is  of particular interest in the three-dimensional case.

\begin{lemma}  For any $n$-dimensional gradient steady soliton ($n\ge 3$) with vanishing Weyl tensor $W_{ijkl}=0$, we have 
$$ |C_{ijk}|^2=\frac{1}{(n-2)^2} (|R_{ik} \nabla_j F - R_{ij} \nabla_k F|^2-\frac{2}{(n-1)} |R\nabla F-\frac{1}{2}\nabla G|^2).\eqno(3.2)$$
Here $C_{ijk}$ is the Cotton tensor defined by (2.9).
\end{lemma} 

\begin{proof}
Notice that, by (1.1) and Lemma 2.1, we have $$\nabla G = 2Rc (\nabla F, \cdot)=-\nabla R. \eqno(3.3)$$
Using (2.1), (3.3), and the assumption $W_{ijkl}=0$, we can express 
\begin{align*}
 C_{ijk} =&\nabla_k R_{ij}-\nabla_j R_{ik}-\frac {1}{2(n-1)} ( g_{ij} \nabla_k R - g_{ik} \nabla_j R)\\
= & R_{kjil} \nabla_l F + \frac{1}{2(n-1)} (g_{ij}\nabla_kG -g_{ik}\nabla_j G)\\
             = & \frac{1}{n-2}(R_{ik} \nabla_j F- R_{ij} \nabla_k F) +\frac{1}{2(n-1)(n-2)} (g_{ik}\nabla_j G -g_{ij} \nabla_k G)\\
              & - \frac{R}{(n-1)(n-2)} (g_{ik} \nabla_j F - g_{ij}\nabla_k F).
 \end{align*}
Hence, by direct computations, we have 
\begin{align*}
|C_{ijk}|^2= & \frac{1}{(n-2)^2}  |R_{ik} \nabla_j F - R_{ij} \nabla_k F|^2 +\frac{1}{2(n-1)(n-2)^2} |\nabla G|^2 \\
& +\frac{2R^2}{(n-1)(n-2)^2}|\nabla F|^2 +\frac{2}{(n-1)(n-2)^2}[R\nabla G\cdot\nabla F-Rc(\nabla F, \nabla G)] \\ & 
-\frac{4R} {(n-1)(n-2)^2}[R|\nabla F|^2 - Rc(\nabla F, \nabla F)] -\frac{2R} {(n-1)(n-2)^2} \nabla G\cdot \nabla F.
\end{align*}
On the other hand, by (3.3), $$Rc (\nabla F, \nabla G)=\frac{1}{2}|\nabla G|^2,$$ and $$ Rc (\nabla F, \nabla F)=\frac{1}{2}\nabla G\cdot \nabla F.$$
Thus, 
\begin{align*}
|C_{ijk}|^2= & \frac{1}{(n-2)^2}  |R_{ik} \nabla_j F - R_{ij} \nabla_k F|^2 \\ 
-& \frac{1}{2(n-1)(n-2)^2}(|\nabla G|^2-4R\nabla G\cdot\nabla F+4R^2|\nabla F|^2)\\
=& \frac{1}{(n-2)^2} (|R_{ik} \nabla_j F - R_{ij} \nabla_k F|^2 - \frac{2}{(n-1)} |R\nabla F-\frac{1}{2}\nabla G|^2). 
\end{align*}

\end{proof}

Next, using Lemma 3.1, we show that the Ricci tensor of our $(M^n, g_{ij}, F)$ has, at least pointwisely, the desired property that it 
has at most two distinct eigenvalues. 

\begin{lemma} At any point $p\in \Sigma_c$, the Ricci tensor of $(M^n, g_{ij}, F)$ either has a unique eigenvalue $\lambda$,
or has two distinct eigenvalues  $\lambda$ and $\mu$ of multiplicity $1$ and $n-1$ respectively.
In either case,  $e_1=\nabla F /|\nabla F |$ is an eigenvector with eigenvalue $\lambda$.
In other words, for any orthonormal basis $e_2, \cdots, e_n$ tangent to the level surface $\Sigma_c$ at $p$, the
Ricci tensor has the following properties:

\smallskip

(i) \ \ $Rc(e_1, e_1)=R_{11}$;

(ii) \ $Rc(e_1, e_b)=R_{1b}=0, \ b=2, \cdots, n$;

(iii) $Rc(e_a, e_b)=R_{aa}\delta_{ab}, \  a, b=2, \cdots, n$;

\smallskip

\noindent where either $R_{11}=\cdots=R_{nn}=\lambda$ or $R_{11}=\lambda$ and $R_{22}=\cdots=R_{nn}=\mu$.
\end{lemma}

\begin{proof}  For any regular value  $c$ of $F$, pick an orthonormal basis $\{E_1, \cdots, E_n\}$ of the tangent space $T_p M$ at 
$p\in \Sigma_c= \{x\in M: F(x)=c\} $ so that $Rc(E_i, E_j)=\lambda_i \delta_{ij}$.    Then, we have
$$|R_{ik} \nabla_j F - R_{ij} \nabla_k F|^2=2 \sum_{j=1}^n |\nabla_j F|^2 \sum_{i\neq j}\lambda_i^2,$$ and 
$$|R\nabla F-\frac{1}{2}\nabla G|^2=\sum_{j=1}^n |\nabla_j F|^2 (\sum_{i\neq j}\lambda_i)^2.$$ Plugging the above two identities into 
(3.2) and noticing that $C_{ijk}=0$  by assumption, it follows that
$$\sum_{j=1}^n |\nabla_j F|^2(\sum_{i\neq
j, k\neq j}(\lambda_i-\lambda_k)^2)=0. \eqno(3.4)$$
Since  $c$ is a regular value of $F$ and $p\in \Sigma_c$, $\nabla F(p)\neq 0$.  On the other hand, from (3.4) 
it is easy to see that if $\nabla F(p)$ has two or more nonzero components with respect to  $\{E_i\}_{i=1}^{n}$, then $\lambda_1=\cdots=\lambda_n$
so the Ricci tensor has a unique eigenvalue. Otherwise, say $\nabla_{1} F\neq 0$ and $\nabla_{i} F=0$ for $i=2, \cdots, n$,
then $\nabla F=|\nabla F| E_1$, with $E_1= \nabla F/|\nabla F|$, is an eigenvector of $Rc$, and $\lambda_2=\cdots=\lambda_n$. In either case, we conclude that
$\nabla F$ is an eigenvector, and the Ricci tensor has the desired properties.

\end{proof}

\begin{remark}  Fern\'andez-L\'opez and Garc\'ia-R\'io \cite{FG} showed, by a different argument, that for shrinking solitons
with harmonic Weyl tensor the gradient of any potential function is an eigenvector of
the Ricci tensor.
\end{remark}

Finally we show that the eigenvalues of the Ricci tensor, and the scalar curvature $R$ are constant on level surfaces $\Sigma_c=\{F=c\}$.

\begin{lemma} Let $c$ be a regular value of $F$ and $\Sigma_c=\{F=c\}$. Then:

 \medskip

(a) The function $G = |\nabla F|^2$ is constant on $\Sigma_c$, i.e., $G$ is a function of $F$ only;

\smallskip

(b) The scalar curvature $R$ of $(M^n, g_{ij}, F)$ is constant on $\Sigma_c$;

\smallskip

(c) The second fundamental form $h_{ab}$ of $\Sigma_c$ is of the form $h_{ab}=\frac{H}{n-1} g_{ab}$;

\smallskip

(d) The mean curvature $H$ is constant on $\Sigma_c$;

\smallskip

(e) $\Sigma_c$, with the induced metric $g_{ab}$,  is of constant sectional curvature.

\end{lemma}

\begin{proof} Let  $\{e_1, e_2, \cdots, e_n\}$ be an orthonormal frame with $e_1=\nabla F/|\nabla F|$ and $e_2, \cdots, e_n$
tangent to $\Sigma_c$.

First of all, by (3.3) and Lemma 3.1 (ii),
$$ \nabla_a G=2Rc(\nabla F, e_a)=0, \  a=2, \cdots, n.$$ Hence $G$ is constant on $\Sigma_c$,
 and so is $R$ by (2.3). This proves (a) and (b).

Next, since $R_{ab}=R_{aa} g_{ab}$ and $R_{22}=\cdots=R_{nn}$ by Lemma 3.1, we  have
$$h_{ab}=G^{-1/2}R_{ab}=\frac{H}{n-1} g_{ab}, \ a, b=2, \cdots, n,$$ where  $$ H=G^{-1/2} (R-R_{11}).$$
Moreover, the Codazzi equation says that, for $a, b, c=2, \cdots, n$,
$$R_{1cab}=\nabla_a^{\Sigma_c}h_{bc}-\nabla_b^{\Sigma_c}h_{ac}.$$ Tracing over b and c, we obtain
$$0=R_{1a}=\nabla_a^{\Sigma_c} H- \nabla_b^{\Sigma_c}h_{ab}=(1-\frac{1}{n-1})e_a H,$$ proving (c) and (d).

Finally, by the Gauss equation, the sectional curvatures of
$(\Sigma_c, g_{ab})$ are given by

$$R^{\Sigma_c}_{abab} = R_{abab} +h_{aa}h_{bb}-h_{ab}^2= R_{abab} + \frac{H^2}{(n-1)^2}. \eqno(3.5)$$
On the other hand, since $W_{ijkl}=0$, 
\begin{align*}
R_{abab}=  & \frac{2}{n-2} R_{aa}-\frac{R}{(n-1)(n-2)} \\
= &  \frac{2 G^{1/2}H-R}{(n-1)(n-2)}.
\end{align*}
But we already showed that $G, H, R$ are constant on $\Sigma_c$.  Therefore $(\Sigma_c, g_{ab})$ has constant sectional curvature
$$K_c=  \frac{2 G^{1/2}H-R}{(n-1)(n-2)}+ \frac{H^2}{(n-1)^2},$$ proving (e).
\end{proof}

With Lemma 3.3 in our hands, we now conclude the proof of Proposition 3.1: Recall that in any neighborhood, where  $G\neq 0$, of
the equipotential hypersurface $$\Sigma_c=: \{x\in M: F(x)=c\} $$ of a
regular value $c$ of $F$,  we can express the metric
$ds^2=g_{ij}(x)dx^{i}dx^{j}$ as
$$ds^2=G^{-1}(F, \theta) dF^2 + g_{ab}(F, \theta) d\theta^{a} d\theta^{b},$$
where $\theta=(\theta^{2}, \cdots, \theta^{n})$ denotes
intrinsic coordinates for $\Sigma_c$. Then Lemma 3.3 tells us
that $G=G(F)$, $g_{ab}=g_{ab}(F)$, and that ($\Sigma_c,g_{ab}$) is a
space form,  with positive curvature (this follows from (3.5) and the assumption that $(M^n,
g_{ij}, F)$ has positive sectional curvature). Also, F has exactly one minima
at some origin $x_0$, otherwise $(M^n, g_{ij}, F)$ would split out a flat factor which is impossible.
Hence, on $M \setminus \{x_0\}$ we have $$ds^2=G^{-1}(F) dF^2 + \varphi^2(F)\bar{g}, \eqno(3.6)$$
where $\bar g$ denotes the standard metric on the unit sphere
$\mathbb{S}^{n-1}$, and $\phi$ is some smooth function on $M^n$
depending only on $F$ and vanishing only at $x_0$. Thus, $(M^n,
g_{ij}, F)$ is a rotationally symmetric gradient steady soliton on
$\mathbb{R}^n$. Therefore, it is the Bryant soliton.
\end{proof}

\medskip

\noindent {\bf Proof of Theorem 1.2}

\begin {proof}
Now, by assumption,  $(M^n, g_{ij}, F)$ is a complete noncompact locally conformally
flat gradient steady soliton. By  Proposition 2.5, we know
that  $(M^n, g_{ij}, F)$ has bounded and nonnegative curvature
operator $0\le Rm\le C$.  From Hamilton's  strong maximum principle (see
\cite{Ha86} and \cite {Ha95F}),  $(M^n, g_{ij}, F)$ is either of
positive curvature operator, or its holonomy group reduces.  

If   $(M^n, g_{ij}, F)$ has positive curvature operator $Rm>0$, then by Proposition 3.1/Theorem 1.1, $(M^n, g_{ij}, F)$ must be the Bryant soliton.

On the other hand, if $(M^n, g_{ij}, F)$ has reduced holonomy, then $(M^n, g_{ij}, F)$  is either a Riemannian product, or a locally symmetric 
space, or irreducible but not locally symmetric. 

\smallskip
$\bullet$ Case (a): $(M^n, g_{ij}, F)$ is a Riemannian product.

It is known (cf. p.61 in \cite{Besse}) that the only conformally flat Riemannian products are
either the product of a space form $N^{n-1}$ with $\mathbb{S}^1$ or $\mathbb{R}^1$, or the product of two Riemannian manifolds,
one with constant sectional curvature $-1$ and the other with constant sectional curvature $1$. However, since our $(M^n, g_{ij})$ is a steady
Ricci soliton with nonnegative sectional curvature,  this implies that the latter case cannot occur and that in the former case $N^{n-1}$ must be flat. Hence
$(M^n, g_{ij}, F)$ is flat in case (a);

\smallskip
$\bullet$ Case (b):  $(M^n, g_{ij}, F)$ is a locally symmetric space.

In case (b), $(M^n, g_{ij}, F)$ is necessarily Einstein. However, $(M^n, g_{ij}, F)$ is noncompact and of nonnegative sectional curvature. Therefore it 
follows that it must be Ricci flat and hence flat;

\smallskip
$\bullet$ Case (c):  $(M^n, g_{ij}, F)$ is irreducible and not locally symmetric.

In this case the flatness of $(M^n, g_{ij}, F)$ follows from the holonomy classification theorem of Berger and Simons (cf. \cite{Besse}),
the fact that $$|W_{ijkl}|^2\ge \frac{3(m-1)}{m(m+1)(2m-1)}R^2 \eqno(3.7)$$ for K\"ahler manifolds of complex dimension $m>1$
(cf. Proposition 2.68 in \cite{Besse}), and that our $(M^n, g_{ij}, F)$ is noncompact, conformally flat and of nonnegative sectional curvature.

Thus we have shown that in all above three cases $(M^n, g_{ij}, F)$ are flat. This completes the proof of Theorem 1.2. 
\end{proof}

\smallskip
\begin{remark} Shortly after our work appeared on the arXiv, Catino and Mantegazza \cite{CM} obtained a similar result to our Theorem 1.1 for dimension 
$n\ge 4$ by studying the evolution of Weyl tensor under the Ricci flow.  However, as pointed out in \cite{CM}, their argument does not work for $n=3$.
\end{remark}

\section{Further Remarks}

In this section, we point out that several lemmas in the proof of Theorem 1.1 for gradient steady Ricci solitons also hold  for gradient shrinking and expanding
Ricci solitons satisfying
$$R_{ij}=\nabla_i\nabla_j F + \rho g_{ij}, \eqno(4.1)$$ with $\rho=1/2$ for shrinkers and  $\rho=-1/2 $ for expanders respectively. In particular, our method, when combined with a 
result of Kotschwar \cite{Kot}, yields another proof of the classification theorem for n-dimensional ($n\ge 4$) complete locally conformally flat shrinking gradient 
solitons (see Proposition 4.1). 

\medskip

\begin{remark}  A similar version of Lemma 3.1 is valid for locally conformally flat gradient shrinkers and expanders. 
 
First of all, similar arguments as in showing Lemma 2.1 imply that 
$$\nabla_iR=-2R_{ij}\nabla_jF, $$ and
$$R+|\nabla F|^2+2\rho F=C_0 $$ for some constant $C_0$. Hence, by defining
$$G_{\rho}=: G+2\rho F=|\nabla F|^2+2\rho F, $$  we have 
$$R+G_{\rho}=C_0 $$ and $$ \nabla G_{\rho} =-\nabla R= 2Rc (\nabla F, \cdot). \eqno(4.2)$$

Therefore, by replacing $G=|\nabla F|^2$ by $G_{\rho}=G+2\rho F$ and carrying out the same arguments as in the proof of Lemma 3.1, we 
obtain 

\begin{lemma}  For any $n$-dimensional gradient shrinking or expanding soliton ($n\ge 3$) with vanishing Weyl tensor $W_{ijkl}=0$, the Cotton tensor
$C_{ijk}$ has the property that 
$$ |C_{ijk}|^2=\frac{1}{(n-2)^2} (|R_{ik} \nabla_j F - R_{ij} \nabla_k F|^2-\frac{2}{(n-1)} |R\nabla F-\frac{1}{2}\nabla G_{\rho}|^2).\eqno(4.3)$$
\end{lemma}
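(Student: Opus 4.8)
The plan is to run the proof of Lemma 3.1 essentially verbatim, with the single substitution $G=|\nabla F|^2 \mapsto G_{\rho}=|\nabla F|^2+2\rho F$ carried out everywhere. The reason this works is structural: in the steady argument the function $G$ never enters on its own, but only through the two identities $\nabla G=-\nabla R$ and $\nabla G=2Rc(\nabla F,\cdot)$ (equation (3.3)), and both of these persist in the shrinking/expanding case with $G$ replaced by $G_{\rho}$ — this is precisely the content of (4.2). The quantity $|\nabla F|^2$ alone no longer satisfies these relations (there is a stray $2\rho\,\nabla F$ coming from $\nabla|\nabla F|^2=2(R_{ij}-\rho g_{ij})\nabla_jF$), and the extra term $2\rho F$ in $G_{\rho}$ is exactly the correction that restores them. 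I would also record at the outset that, like Lemma 3.1, this is a pointwise identity and needs no completeness or curvature hypothesis.

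First I would check that the differentiated soliton identity (2.1), namely $\nabla_iR_{jk}-\nabla_jR_{ik}=R_{ijkl}\nabla_lF$, continues to hold for solitons of type (4.1): writing $R_{ij}-\rho g_{ij}=\nabla_i\nabla_jF$ and using that $g_{ij}$ is parallel, the term $\rho g_{ij}$ drops out of the antisymmetrized covariant derivative, and the Ricci identity applied to the Hessian of $F$ produces the same right-hand side. Then, starting from the definition (2.9) of $C_{ijk}$, I substitute (2.1) together with $\nabla R=-\nabla G_{\rho}$ from (4.2), and use $W_{ijkl}=0$ to replace $R_{kjil}\nabla_lF$ by its expression through the Ricci tensor, the scalar curvature, and $\nabla F$ — here one uses $R_{jl}\nabla_lF=\frac{1}{2}\nabla_jG_{\rho}$, again from (4.2). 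This yields
$$C_{ijk}=\frac{1}{n-2}(R_{ik}\nabla_jF-R_{ij}\nabla_kF)+\frac{1}{2(n-1)(n-2)}(g_{ik}\nabla_jG_{\rho}-g_{ij}\nabla_kG_{\rho})-\frac{R}{(n-1)(n-2)}(g_{ik}\nabla_jF-g_{ij}\nabla_kF),$$
which is the steady-case expression with $G$ replaced by $G_{\rho}$. Taking the norm square and using $Rc(\nabla F,\nabla G_{\rho})=\frac{1}{2}|\nabla G_{\rho}|^2$ and $Rc(\nabla F,\nabla F)=\frac{1}{2}\nabla G_{\rho}\cdot\nabla F$ — both immediate consequences of (4.2) — to cancel the cross terms, then completing the square in $R\nabla F-\frac{1}{2}\nabla G_{\rho}$, gives (4.3).

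I do not expect a genuine obstacle; the work is purely bookkeeping. The one thing that requires care is verifying that in the proof of Lemma 3.1 the symbol $G$ truly occurs only inside $\nabla G=-\nabla R=2Rc(\nabla F,\cdot)$, so that the replacement $G\mapsto G_{\rho}$ is legitimate and no uncancelled $\rho F$ terms are left behind. If one prefers not to rely on this observation, the safe alternative is to redo the computation from scratch keeping $\rho$ explicit and check that the combination $|\nabla F|^2+2\rho F$ assembles itself automatically — which it does, exactly because of the identities collected in (4.2).
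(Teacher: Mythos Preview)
Your proposal is correct and matches the paper's own argument exactly: the paper simply observes that (4.2) replaces (3.3) when $G$ is swapped for $G_{\rho}$, and then says the proof of Lemma 3.1 goes through verbatim. Your additional remarks---that (2.1) is unchanged because $\rho g_{ij}$ is parallel, and that the occurrences of $G$ in the steady proof are only through $\nabla G$---make explicit what the paper leaves implicit.
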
 

Consequently, Lemma 3.2 and Lemma 3.3 hold for locally conformally flat gradient shrinkers 
and expanders as well.
\end{remark}

\begin{lemma}  Let $(M^n, g_{ij}, F)$ be any $n$-dimensional, $n\ge 3 $, complete locally conformally flat gradient shrinking or expanding Ricci soliton
satisfying (4.1). Then, at any regular point $p\in \Sigma_c$, the
Ricci tensor of $(M^n, g_{ij}, F)$ either has a unique eigenvalue
$\lambda$, or has two distinct eigenvalues  $\lambda$ and $\mu$ of
multiplicity $1$ and $n-1$ respectively. In either case,
$e_1=\nabla F /|\nabla F |$ is an eigenvector with eigenvalue
$\lambda$. In other words, for any orthonormal basis $e_2, \cdots,
e_n$ tangent to the level surface $\Sigma_c$ at $p$, the Ricci
tensor has the following properties:

\smallskip

(i) \ \ $Rc(e_1, e_1)=R_{11}$;

(ii) \ $Rc(e_1, e_b)=R_{1b}=0, \ b=2, \cdots, n$;

(iii) $Rc(e_a, e_b)=R_{aa}\delta_{ab}, \  a, b=2, \cdots, n$;

\smallskip

\noindent where either $R_{11}=\cdots=R_{nn}=\lambda$ or $R_{11}=\lambda$ and $R_{22}=\cdots=R_{nn}=\mu$.
\end{lemma}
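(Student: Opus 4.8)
The plan is to mimic, essentially verbatim, the proof of Lemma 3.2 in Section 3, using the shrinker/expander version of the Cotton tensor identity recorded in Lemma 4.1 in place of the steady-soliton identity (3.2). First I would fix a regular value $c$ of $F$ and a point $p\in\Sigma_c$, so that $\nabla F(p)\neq 0$, and diagonalize the Ricci tensor at $p$: pick an orthonormal basis $\{E_1,\dots,E_n\}$ of $T_pM$ with $Rc(E_i,E_j)=\lambda_i\delta_{ij}$. As in the proof of Lemma 3.2, expand the two quantities appearing on the right-hand side of (4.3) in this basis:
$$|R_{ik}\nabla_jF-R_{ij}\nabla_kF|^2=2\sum_{j=1}^n|\nabla_jF|^2\sum_{i\neq j}\lambda_i^2,$$
and, using (4.2) which gives $\tfrac12\nabla G_\rho=Rc(\nabla F,\cdot)$ so that the $j$-th component of $R\nabla F-\tfrac12\nabla G_\rho$ is $(\sum_{i\neq j}\lambda_i)\nabla_jF$,
$$\Bigl|R\nabla F-\tfrac12\nabla G_\rho\Bigr|^2=\sum_{j=1}^n|\nabla_jF|^2\Bigl(\sum_{i\neq j}\lambda_i\Bigr)^2.$$

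Next I would substitute these two identities into (4.3) and invoke the fact that $C_{ijk}=0$, since the soliton is locally conformally flat (for $n\ge 4$ vanishing Weyl tensor forces vanishing Cotton tensor, and for $n=3$ local conformal flatness is exactly $C_{ijk}=0$). The algebraic cancellation is identical to the steady case: the combination $2\sum_{i\neq j}\lambda_i^2-\tfrac{2}{n-1}(\sum_{i\neq j}\lambda_i)^2$ reassembles, after expanding the square of the sum, into $\tfrac{1}{n-1}\sum_{i\neq j,\,k\neq j}(\lambda_i-\lambda_k)^2$, so that one obtains
$$\sum_{j=1}^n|\nabla_jF|^2\Bigl(\sum_{i\neq j,\,k\neq j}(\lambda_i-\lambda_k)^2\Bigr)=0,$$
which is the exact analogue of (3.4). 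From here the conclusion is a purely linear-algebraic case analysis, unchanged from Lemma 3.2: if $\nabla F(p)$ has at least two nonzero components in the basis $\{E_i\}$, then for each such $j$ all the remaining $\lambda_i$ must coincide, and combining two such indices forces $\lambda_1=\cdots=\lambda_n$, i.e.\ a single eigenvalue; otherwise $\nabla F(p)=|\nabla F(p)|E_1$ (after relabeling), so $e_1=\nabla F/|\nabla F|$ is an eigenvector with eigenvalue $\lambda:=\lambda_1$, and the vanishing of $\sum_{i\neq 1,\,k\neq 1}(\lambda_i-\lambda_k)^2$ gives $\lambda_2=\cdots=\lambda_n=:\mu$. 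In either case properties (i)–(iii) follow immediately.

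I do not expect any genuine obstacle here, since Remark 4.1 has already done the only non-routine work, namely checking that the substitution $G\mapsto G_\rho=G+2\rho F$ makes the derivation of (4.3) go through line for line (the key points being that (4.2) still expresses $\nabla G_\rho$ as $2Rc(\nabla F,\cdot)$ and that the contracted second Bianchi identity and the soliton identity (2.1)—in its shrinker/expander form—are unaffected by the $\rho g_{ij}$ term because it is parallel). The mild point to be careful about is that the expansion of $|R\nabla F-\tfrac12\nabla G_\rho|^2$ and the resulting cancellation rely only on the \emph{identity} (4.2), not on any sign or size of $\rho$, so the argument is uniform in $\rho\in\{1/2,-1/2\}$ and indeed would work for arbitrary $\rho$. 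Thus the proof reduces to: (1) quote (4.3) from Lemma 4.1; (2) diagonalize $Rc$ at $p$ and expand both tensorial norms; (3) use $C_{ijk}=0$ to deduce the identity analogous to (3.4); (4) run the linear-algebra dichotomy to extract the eigenvalue structure and the fact that $\nabla F$ is an eigenvector.
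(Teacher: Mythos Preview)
Your proposal is correct and follows exactly the approach the paper intends: the paper does not give a separate proof of Lemma 4.2 but simply states (in Remark 4.1) that after replacing $G$ by $G_\rho$ and invoking Lemma 4.1 in place of Lemma 3.1, the proof of Lemma 3.2 goes through verbatim, which is precisely what you have written out. The algebraic reductions, the use of $C_{ijk}=0$ from local conformal flatness, and the final linear-algebraic dichotomy are all identical to the steady case.
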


\begin{lemma} Let $(M^n, g_{ij}, F)$ be any $n$-dimensional, $n\ge 3 $, complete locally conformally flat gradient shrinking or expanding Ricci soliton
satisfying (4.1), and let  $c$ be a regular value of $F$ and $\Sigma_c=\{F=c\}$. Then:

 \medskip

(a) The function $G = |\nabla F|^2$ is constant on $\Sigma_c$, i.e., $G$ is a function of $F$ only;

\smallskip

(b) $R$, the scalar curvature of $(M^n, g_{ij}, F)$, is constant on $\Sigma_c$;

\smallskip

(c) The second fundamental form $h_{ab}$ of $\Sigma_c$ is of the form $h_{ab}=\frac{H}{n-1} g_{ab}$;

\smallskip

(d) The mean curvature $H$ is constant on $\Sigma_c$;

\smallskip

(e) $(\Sigma_c, g_{ab})$  is of constant sectional curvature $K_{c,\rho}$.

\end{lemma}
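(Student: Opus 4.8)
The plan is to mimic, essentially verbatim, the proof of Lemma 3.3 from the steady case, replacing the function $G=|\nabla F|^2$ by $G_\rho=G+2\rho F$ wherever the soliton identity was used. By Remark 4.2, we already have at our disposal the analogues of (3.3), namely $\nabla G_\rho = -\nabla R = 2Rc(\nabla F,\cdot)$, and the analogue of Lemma 3.2 (stated as Lemma 4.1 above), so that at each regular point $p\in\Sigma_c$ the Ricci tensor splits as in properties (i)--(iii), with $e_1=\nabla F/|\nabla F|$ an eigenvector and the tangential directions sharing a single eigenvalue.

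First I would prove (a) and (b) simultaneously. From $\nabla G_\rho = 2Rc(\nabla F,\cdot)$ and property (ii) of Lemma 4.2, we get $\nabla_a G_\rho = 2Rc(\nabla F,e_a)=0$ for $a=2,\dots,n$, so $G_\rho$ is constant on $\Sigma_c$. Since $R+G_\rho = C_0$, $R$ is constant on $\Sigma_c$ as well. Now $G = G_\rho - 2\rho F = G_\rho - 2\rho c$ is also constant on $\Sigma_c$, so $G$ is a function of $F$ alone; this gives (a) and (b). The only genuinely new wrinkle compared to the steady case is the bookkeeping that on a fixed level set $F\equiv c$ the extra term $2\rho F$ is constant, hence does not interfere.

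Next, for (c) and (d), I would proceed exactly as in Lemma 3.3. Since by Lemma 4.2 the Ricci tensor restricted to $\Sigma_c$ is $R_{ab}=R_{aa}\delta_{ab}$ with all tangential eigenvalues equal, and since $h_{ab}=G^{-1/2}R_{ab}$ (this is the standard relation $R_{1a1b}$-free consequence of the soliton equation via $\nabla_1 e_1$; more precisely $h_{ab}=\langle\nabla_a e_1,e_b\rangle$ and the soliton/eigenvector structure forces $h_{ab}$ proportional to the metric), we obtain $h_{ab}=\frac{H}{n-1}g_{ab}$ with $H=G^{-1/2}(R-R_{11})$. Then the Codazzi equation $R_{1cab}=\nabla^{\Sigma_c}_a h_{bc}-\nabla^{\Sigma_c}_b h_{ac}$, traced over $b$ and $c$ together with $R_{1a}=0$, yields $(1-\tfrac{1}{n-1})e_a H=0$, so $H$ is constant on $\Sigma_c$. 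Finally, for (e), the Gauss equation gives $R^{\Sigma_c}_{abab}=R_{abab}+\frac{H^2}{(n-1)^2}$, and the vanishing of the Weyl tensor expresses $R_{abab}=\frac{2}{n-2}R_{aa}-\frac{R}{(n-1)(n-2)}=\frac{2G^{1/2}H-R}{(n-1)(n-2)}$; since $G$, $H$, $R$ are all constant on $\Sigma_c$, the sectional curvature $K_{c,\rho}=\frac{2G^{1/2}H-R}{(n-1)(n-2)}+\frac{H^2}{(n-1)^2}$ is a constant, proving (e).

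I do not expect any serious obstacle: the whole point of Remark 4.2 is that the steady-case proofs go through once $G$ is replaced by $G_\rho$ in the two places where the soliton equation was invoked (the identity $R+|\nabla F|^2+2\rho F=C_0$ and $\nabla G_\rho=2Rc(\nabla F,\cdot)$), and everything else — the Codazzi and Gauss equations, the Weyl-flatness decomposition of $R_{abab}$, the tracing arguments — is purely Riemannian-geometric and $\rho$-independent. The one point deserving a sentence of care is that $K_{c,\rho}$ now genuinely depends on $c$ (and on $\rho$), unlike possible expectations from the steady case where positivity was also claimed; here we only assert constancy on each $\Sigma_c$, not any sign, so no curvature-sign input (which was where the steady argument used positive sectional curvature) is needed. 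Thus the mildest care is simply to state the conclusion as "$(\Sigma_c,g_{ab})$ has constant sectional curvature $K_{c,\rho}$" without further claims.
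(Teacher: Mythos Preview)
Your approach is correct and is precisely what the paper intends: the paper gives no separate proof of Lemma 4.3, simply asserting (in the remark preceding it) that the proof of Lemma 3.3 carries over once $G$ is replaced by $G_\rho$ in the soliton identities, which is exactly what you do. One small correction worth making: for shrinking/expanding solitons the second fundamental form is $h_{ab}=G^{-1/2}\nabla_a\nabla_b F=G^{-1/2}(R_{ab}-\rho g_{ab})$, not $G^{-1/2}R_{ab}$, since (4.1) gives $\nabla_i\nabla_j F=R_{ij}-\rho g_{ij}$; this merely shifts your expressions for $H$ and $R_{abab}$ by additive constants involving $\rho$ (e.g.\ $H=G^{-1/2}(R-R_{11}-(n-1)\rho)$), so the conclusions (c)--(e) are unaffected.
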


Note that for shrinking solitons, we have $K_{c,1/2} \geq 0$ according to \cite{Zh2}.

\smallskip

\begin{remark}  There is another formula for $|C_{ijk}|^2$ which is valid for all $n$-dimensional ($n\ge 3
$) complete gradient {\sl shrinking, steady}, and {\sl expanding} Ricci
solitons with vanishing Weyl tensor. This new formula relates $|C_{ijk}|^2$ more explicitly to the geometry of the 
level surfaces of $F$, and immediately implies Lemma 3.2/4.2 and Lemma 3.3/4.3 (a)-(d). 

\begin{lemma} For any $n$-dimensional gradient shrinking, or steady, or expanding soliton $(M^n, g_{ij}, F)$ ($n\ge 3$) with vanishing Weyl tensor $W_{ijkl}=0$, we have 
$$|C_{ijk}|^2=\frac{2G^2}{(n-2)^2} |h_{ab}-\frac{H}{n-1}g_{ab}|^2 +\frac{1}{2(n-1)(n-2)}
|\nabla_a G|^2, \eqno(4.4)$$ 
where $h_{ab}$ and $H$ are, as in Lemma 3.3, the second fundamental form and the mean curvature for the level surface 
$\Sigma_c=\{F=c\}$ at any regular value $c$ of $F$. 
\end{lemma}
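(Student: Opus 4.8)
The plan is to start from the expression for the Cotton tensor $C_{ijk}$ already derived in the proof of Lemma~3.1 (the third displayed line there, valid for shrinkers, steady and expanders once $G$ is replaced by $G_\rho$), and rewrite it in the adapted orthonormal frame $\{e_1,\dots,e_n\}$ with $e_1=\nabla F/|\nabla F|$ and $e_2,\dots,e_n$ tangent to a regular level surface $\Sigma_c$. The point is that $\nabla_j F = |\nabla F|\,\delta_{j1}$ in this frame, so the terms $R_{ik}\nabla_j F - R_{ij}\nabla_k F$ and the $g$-contractions collapse: $C_{ijk}$ has nonzero components only when exactly one of $j,k$ equals $1$. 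Writing $G=|\nabla F|^2$ and using $h_{ab}=G^{-1/2}R_{ab}$ together with the soliton equation (so the $e_1$-direction derivatives of $F$ relate to $G$), I expect $C_{1bc}$ to be proportional to $R_{bc} - \tfrac{1}{n-1}(R-R_{11})g_{bc} = G^{1/2}\big(h_{bc}-\tfrac{H}{n-1}g_{bc}\big)$ plus a multiple of $(g_{bc}\nabla_1 G)$-type terms, while $C_{a1c}$-type components pick up the tangential gradient $\nabla_a G$.

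Concretely, the key steps in order: (i) recall $\nabla_a G = 2Rc(\nabla F,e_a)$, so $\nabla_a G$ for $a\ge 2$ measures $R_{1a}$; write out $C_{ijk}$ from Lemma~3.1's intermediate line in the adapted frame, separating components by how many indices equal $1$; (ii) identify $C_{abc}$ with $a,b,c\ge 2$ and $C_{111}$ as forced to vanish (all $\nabla_j F$ with $j\ge 2$ vanish, and the $j=k=1$ case kills itself by antisymmetry in the last two indices of the Cotton tensor), so only $C_{1bc}$, $C_{a1b}$ and their index-permutations survive; (iii) compute $|C_{1bc}|^2$: the $R_{ik}\nabla_j F - R_{ij}\nabla_k F$ piece together with the $\frac{R}{(n-1)(n-2)}$ piece combines into $\frac{G}{(n-2)^2}\big|R_{bc}/G^{1/2}-\ldots\big|$, which after substituting $h_{ab}=G^{-1/2}R_{ab}$, $H=G^{-1/2}(R-R_{11})$ becomes exactly $\frac{2G^2}{(n-2)^2}|h_{ab}-\frac{H}{n-1}g_{ab}|^2$ after accounting for the multiplicity in summing over $b,c$; (iv) compute the contribution of the components with one tangential and one $e_1$ index, where the $\frac{1}{2(n-1)(n-2)}(g_{ik}\nabla_j G - g_{ij}\nabla_k G)$ term produces $\frac{1}{2(n-1)(n-2)}|\nabla_a G|^2$; (v) check that cross terms between these two blocks vanish by orthogonality of $e_1$ to $e_2,\dots,e_n$, and that the $\nabla_1 G$ (normal-direction) parts of the various tensorial expressions cancel — this is where the soliton identity $\nabla G=-\nabla R$ and $\nabla_1 G = 2Rc(\nabla F,e_1)/|\nabla F|$ must be used to see the cancellation.

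Alternatively, and perhaps more cleanly, one can avoid recomputing from scratch and instead show directly that formula~(4.4) equals formula~(4.3): expand $|R_{ik}\nabla_j F - R_{ij}\nabla_k F|^2$ and $|R\nabla F - \tfrac12\nabla G_\rho|^2$ in the adapted frame using $\nabla_j F = |\nabla F|\delta_{j1}$, get $|R_{ik}\nabla_jF-R_{ij}\nabla_kF|^2 = 2G\big(\sum_{i\ge 2}R_{1i}^2 + \sum_{b,c\ge 2}R_{bc}^2 - \ldots\big)$ (being careful about the $R_{1b}$ off-diagonal terms which by Lemma~3.2/4.2 we do \emph{not} get to assume vanish here, since (4.4) is claimed without that reduction — in fact $\sum_{b\ge2}R_{1b}^2 = \tfrac14|\nabla_a G|^2/G$), and likewise expand $|R\nabla F - \tfrac12\nabla G_\rho|^2$, noting $\nabla_1 G_\rho = 2R_{11}|\nabla F|$ and $\nabla_a G_\rho = \nabla_a G = 2R_{1a}|\nabla F|$. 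Substituting these into (4.3) and grouping the $b,c\ge 2$ terms into $|h_{ab}-\frac{H}{n-1}g_{ab}|^2$ and the mixed terms into $|\nabla_a G|^2$ should yield (4.4) after elementary algebra.

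The main obstacle I anticipate is bookkeeping rather than conceptual: correctly tracking the combinatorial factors from summing over repeated indices (the factor $2$ in front of $\frac{G^2}{(n-2)^2}$ and the precise coefficient $\frac{1}{2(n-1)(n-2)}$), and in particular handling the off-diagonal Ricci components $R_{1b}$, which are nonzero in general and whose squares are precisely what reassemble into $|\nabla_a G|^2$. One must resist the temptation to invoke Lemma~3.2/4.2 (which says $R_{1b}=0$), since (4.4) is meant to be an \emph{identity} that itself \emph{implies} Lemma~3.2/4.2 and Lemma~3.3/4.3(a)--(d); so the derivation must be frame-algebraic and use only $W_{ijkl}=0$ and the soliton equation. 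Verifying that no additional terms survive — especially that the potentially dangerous $\nabla_1 G$ contributions cancel identically — will require care but is a routine (if lengthy) computation.
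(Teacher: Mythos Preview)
Your alternative (second) approach is essentially what the paper does: it expands $|h_{ab}-\tfrac{H}{n-1}g_{ab}|^2 = |h_{ab}|^2 - \tfrac{H^2}{n-1}$ in terms of $|Rc|^2$, $R$, $G$, $\nabla G_\rho\cdot\nabla F$ and $|\nabla G_\rho|^2$ (using $h_{ab}=G^{-1/2}R_{ab}$, $R_{11}=\tfrac{1}{2G}\nabla F\cdot\nabla G_\rho$, $R_{1a}=\tfrac{1}{2\sqrt G}\nabla_a G_\rho$), rewrites the expression for $|C_{ijk}|^2$ from Lemma~4.1 in the same variables, and then verifies directly that the two sides of (4.4) agree. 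So that part of your plan is correct and matches the paper's proof.

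Your first approach, however, has a bookkeeping error in step~(ii): the components $C_{abc}$ with $a,b,c\ge 2$ are \emph{not} forced to vanish. The middle term in the displayed expression for $C_{ijk}$ (from the proof of Lemma~3.1) involves $\nabla_j G_\rho$ rather than $\nabla_j F$, and for tangential $b$ one has $\nabla_b G_\rho = \nabla_b G = 2\sqrt G\,R_{1b}$, which is generically nonzero (you yourself emphasize that one must not assume $R_{1b}=0$). Thus $C_{abc} = \tfrac{1}{2(n-1)(n-2)}(g_{ac}\nabla_b G - g_{ab}\nabla_c G)$ contributes to $|C|^2$; similarly $C_{11a}$ does not vanish. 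These extra pieces, together with the $C_{a1b}$ components you identify in step~(iv), are what collectively assemble into the $\tfrac{1}{2(n-1)(n-2)}|\nabla_a G|^2$ term, but the accounting is more involved than your outline indicates. Since your second approach sidesteps this component-by-component decomposition and coincides with the paper's argument, that is the route to pursue.
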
 

\begin{proof} Denote again $$G=|\nabla F|^2  \quad \mbox{and} \quad G_{\rho}=G+2\rho F$$ so that $G_0=G$. 
Let $c$ be any regular value of the potential function $F$ and consider the corresponding  
equipotential hypersurface $$\Sigma_c=: \{x\in M: F(x)=c\}. $$ Let  $\{e_1, e_2, \cdots, e_n\}$ be any orthonormal frame, with 
$e_1=\nabla F/|\nabla F|=\nabla F/\sqrt{G}$ and $e_2, \cdots, e_n$ tangent to $\Sigma_c$. Then the second fundamental form
$h_{ab}$ and the mean curvature $H$ are given respectively  by 
\begin{align*}
h_{ab}=& <\nabla_a e_1, e_b>
= <\nabla_a \frac{\nabla F} {\sqrt{G}}, e_b>\\
=& \frac{1}{\sqrt{G}}R_{ab}, \quad a,b=2,\cdots, n
\end{align*}
 and 
$$H=\frac{1}{\sqrt{G}}(R-R_{11}).$$  

By (4.2), $$R_{11}=\frac{1}{G} Rc (\nabla F, \nabla F)=\frac{1}{2G}\nabla F\cdot\nabla G_{\rho}$$
and 
$$R_{1a}=\frac{1} {\sqrt{G}} Rc( {\nabla F}, e_a)=\frac{1} {2\sqrt{G}}\nabla_a G_{\rho}$$
Moreover, 
\begin{align*}
|\nabla_{\Sigma_c}G_{\rho}|^2= & \sum_{a=2}^n|\nabla_a G_{\rho}|^2\\
= & |\nabla G_{\rho}|^2 -\frac{1}{G} |\nabla G_{\rho}\cdot\nabla F|^2.
\end{align*}
Thus, by direct computations, we obtain
\begin{align*}
|h_{ab}-\frac{H}{n-1}g_{ab}|^2= & |h_{ab}|^2-\frac{H^2}{n-1}\\
=& \frac{1}{G} (|Rc|^2-2\sum_{a=2}^nR^2_{1a}-R^2_{11})-\frac{(R-R_{11})}{(n-1)G}^2\\
=& \frac{1}{G} |Rc|^2 + \frac{R}{(n-1)G^2}\nabla F\cdot\nabla G_{\rho}-\frac{1}{2G^2}|\nabla G_{\rho}|^2 \\
& +\frac{n-2}{4(n-1)G^3}|\nabla F\cdot\nabla G_{\rho}|^2-\frac{R^2}{(n-1)G},
\end{align*}
Now noticing that, by Lemma 4.1, we also have  
\begin{align*}
 |C_{ijk}|^2=& \frac{2G}{(n-2)^2} |Rc|^2-\frac{1}{2(n-2)^2} |\nabla G_{\rho}|^2\\
 -& \frac{1} {2(n-1)(n-2)^2} (|\nabla G_{\rho}|^2-4R\nabla G_{\rho} \cdot \nabla F+4R^2G).
\end{align*}
Therefore, one can verify directly that 
\begin{align*}
\frac{2G^2}{(n-2)^2}|h_{ab}-\frac{H}{n-1}g_{ab}|^2= |C_{ijk}|^2-\frac{1}{2(n-1)(n-2)}
|\nabla_{\Sigma_c}G_{\rho}|^2.
\end{align*}
This completes the proof of lemma 4.4.

\end{proof}

\end{remark}

\smallskip

\begin{remark} Based on Lemma 4.3 for shrinkers, it is not hard to see that the universal cover  $(\tilde{M}^n,  \tilde{g}_{ij}, \tilde{F})$ of a complete 
locally conformally flat gradient shrinking Ricci soliton is a rotationally symmetric gradient shrinking Ricci soliton and  $\tilde{M}^n$ is diffeomorphic to either
$\mathbb{R}^n$, or $\mathbb{S}^n$, or $\mathbb{S}^{n-1}\times \mathbb{R}$.  Combining this with the result of 
Kotschwar \cite{Kot},  one obtains another proof of the following classification theorem for n-dimensional ($n\ge 4$) complete locally conformally flat shrinking gradient 
solitons (which is first due to the combined works of Z.-H. Zhang  and Ni-Wallach, and also the combination of more recent  work of 
Munteanu-Sesum and the earlier works of either Petersen-Wylie, or X. Cao, B. Wang and Z. Zhang). 

\begin{proposition} Any $n$-dimensional, $n\ge 4$, complete locally conformally flat gradient shrinking Ricci soliton is a finite quotient of
$\mathbb{R}^n$, or $\mathbb{S}^n$, or $\mathbb{S}^{n-1}\times \mathbb{R}$. 
\end{proposition}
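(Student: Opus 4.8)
The plan is to follow the route sketched in Remark 4.4: use Lemma 4.3 to reduce, on the universal cover, to the rotationally symmetric case, and then apply Kotschwar's classification \cite{Kot}. So let $(M^n, g_{ij}, F)$ be a complete locally conformally flat gradient shrinking soliton with $n\ge 4$, and pass to its universal Riemannian cover $(\tilde M^n, \tilde g_{ij}, \tilde F)$, where $\tilde F$ is the lift of $F$; this is again a complete locally conformally flat gradient shrinking soliton. Since $n\ge 4$, local conformal flatness is equivalent to $W_{ijkl}=0$, so Lemmas 4.1--4.3 apply on $\tilde M$. By Lemma 4.3 and the note following it (which uses Zhang \cite{Zh2}), in any neighborhood where $\tilde G = |\nabla\tilde F|^2 \neq 0$ the metric takes the form
$$d\tilde s^2 = \tilde G^{-1}(\tilde F)\, d\tilde F^2 + \varphi^2(\tilde F)\,\bar g,$$
with $\bar g$ the metric of a complete simply connected $(n-1)$-dimensional space form of \emph{nonnegative} constant curvature, so every regular level set of $\tilde F$ is, up to scaling, either the round sphere $\mathbb{S}^{n-1}$ or flat $\mathbb{R}^{n-1}$.

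Next I would pass from this local description to the global structure of $(\tilde M, \tilde g)$. If $\tilde F$ is constant, then $(\tilde M, \tilde g)$ is Einstein with positive Einstein constant, hence compact by Myers and, being locally conformally flat and simply connected, isometric to the round $\mathbb{S}^n$. If $\tilde F$ is not constant, then, since $R + |\nabla\tilde F|^2 + 2\rho\tilde F$ is constant and $R\ge 0$, the function $\tilde F$ is proper and (for $\rho = 1/2$) bounded above, so it has critical points; and if some regular level set is flat, then using the Gauss equation (3.5) and the curvature sign from \cite{Zh2} one checks that $\tilde M$ is flat, hence $\tilde M \cong \mathbb{R}^n$. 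Otherwise the level sets are round spheres and $(\tilde M, \tilde g)$ is, away from the critical set of $\tilde F$, a warped product over $\mathbb{S}^{n-1}$; the standard analysis of complete rotationally symmetric metrics — smooth capping at the zeros of $\varphi$ together with completeness — then forces $\tilde M$ to be diffeomorphic to $\mathbb{R}^n$ (if $\varphi$ vanishes simply at one end of the range of $\tilde F$), to $\mathbb{S}^n$ (if it vanishes at both ends), or to $\mathbb{S}^{n-1}\times\mathbb{R}$ (if $\varphi$ is bounded away from $0$, with $\{\nabla\tilde F = 0\}$ a totally geodesic $\mathbb{S}^{n-1}$). In every case $(\tilde M, \tilde g, \tilde F)$ is a rotationally symmetric gradient shrinking Ricci soliton.

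It remains to pin down the metric and descend. By Kotschwar \cite{Kot}, the only complete rotationally symmetric gradient shrinking Ricci solitons are, up to scaling, the Gaussian shrinker on $\mathbb{R}^n$, the round sphere $\mathbb{S}^n$, and the round cylinder $\mathbb{S}^{n-1}\times\mathbb{R}$, so $(\tilde M, \tilde g)$ is one of these. Writing $M = \tilde M/\Gamma$ with $\Gamma = \pi_1(M)$ acting freely, properly discontinuously and isometrically, and observing that the deck transformations preserve $\tilde F$ exactly (since $\tilde F = F\circ\pi$ and $\pi\circ\gamma = \pi$ for every $\gamma\in\Gamma$), I would check that $\Gamma$ is finite in each case: this is automatic for $\mathbb{S}^n$; for the Gaussian shrinker every $\gamma\in\Gamma$ fixes the vertex of $\tilde F$, forcing $\Gamma$ to be trivial; for the cylinder $\Gamma$ preserves the sphere $\{\nabla\tilde F = 0\}$ and hence surjects onto a group of order $\le 2$ (reflections of the $\mathbb{R}$-factor) with kernel a subgroup of $O(n)$ acting freely on $\mathbb{S}^{n-1}$, which is finite. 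Therefore $M$ is a finite quotient of $\mathbb{R}^n$, $\mathbb{S}^n$, or $\mathbb{S}^{n-1}\times\mathbb{R}$.

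The hard part is the second step: upgrading the purely local warped-product description provided by Lemma 4.3 to a global one — controlling $\varphi$ across the critical set of $\tilde F$ (smooth capping, the above trichotomy) and disposing of the flat-level-set case — since the identity $W_{ijkl}=0$ gives information only where $\nabla\tilde F\neq 0$. A secondary point that needs care is the finiteness of $\Gamma$ in the cylinder case, where a priori $\Gamma$ might contain translations along the $\mathbb{R}$-factor; this is precisely what the exact invariance of $\tilde F$ under deck transformations rules out.
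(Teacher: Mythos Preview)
Your proposal is correct and follows exactly the route the paper sketches in the remark containing Proposition~4.1: pass to the universal cover, use Lemma~4.3 (and the nonnegative-curvature input from \cite{Zh2}) to obtain rotational symmetry and the trichotomy $\mathbb{R}^n$, $\mathbb{S}^n$, $\mathbb{S}^{n-1}\times\mathbb{R}$, and then invoke Kotschwar~\cite{Kot}. You in fact supply considerably more detail than the paper does---the paper merely says ``it is not hard to see'' and gives no proof---including the global analysis of the warped-product structure and the descent argument for finiteness of $\Gamma$, both of which the paper omits entirely.
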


\end{remark}

\end{document}